\newtheorem{theorem}{Theorem}[section]
\newtheorem{remark}[theorem]{Remark}
\newtheorem{lemma}[theorem]{Lemma}
\newtheorem{proposition}[theorem]{Proposition}
\numberwithin{equation}{section}
\def\r{\mathbb{R}}
\def\eps{\varepsilon}
\def\tilde{\widetilde}
\def\cJ{\mathcal{J}}
\def\cN{\mathcal{N}}
\def\cU{\mathcal{U}}
\author{Mónica Clapp\footnote{M. Clapp was partially supported by UNAM-DGAPA-PAPIIT grant IN100718 (Mexico), CONACYT grant A1-S-10457 (Mexico), and Stockholm University (Sweden).}\qquad and \qquad Andrzej Szulkin\addtocounter{footnote}{+5}\footnote{A. Szulkin was partially supported by a grant from the Magnuson foundation at the Swedish Academy of Sciences.}}
\title{A simple variational approach to weakly coupled competitive elliptic systems}
\date{}
\begin{document}
\maketitle

\begin{abstract}
The main purpose of this paper is to exhibit a simple variational setting for finding fully nontrivial solutions to the weakly coupled elliptic system \eqref{eq:system}. We show that such solutions correspond to critical points of a $\mathcal{C}^1$-functional $\Psi:\mathcal{U}\to\mathbb{R}$ defined in an open subset $\mathcal{U}$ of the product $\mathcal{T}:=S_1\times\cdots\times S_M$ of unit spheres $S_i$ in an appropriate Sobolev space. We use our abstract setting to extend and complement some known results for the system \eqref{eq:system}.

\textsc{Keywords:} Weakly coupled elliptic system, simple variational setting, subcritical system in exterior domain, entire solutions to critical system, Brezis-Niren\-berg problem.

\textsc{Mathematics Subject Classification: }35J50 (35J47, 35B08, 35B33, 58E30).
\end{abstract}

\section{Introduction}

We study the weakly coupled elliptic system
\begin{equation} \label{eq:system}
\begin{cases}
-\Delta u_i + \kappa_iu_i = \mu_i|u_i|^{p-2}u_i + \sum\limits_{j\neq i}\lambda_{ij}\beta_{ij}|u_j|^{\alpha_{ij}}|u_i|^{\beta_{ij}-2}u_i, \\
u_i\in H,\qquad i,j=1,\ldots,M,
\end{cases}
\end{equation}
where $\Omega$ is a domain in $\mathbb{R}^N$, $N\geq 3$, $\mu_i>0$, $\lambda_{ij}=\lambda_{ji}<0$, $\alpha_{ij},\beta_{ij}>1$, $\alpha_{ij}=\beta_{ji}$, and $\alpha_{ij}+\beta_{ij}=p\in(2,2^*]$. As usual, $2^*:=\frac{2N}{N-2}$ is the critical Sobolev exponent. The space $H$ is, either $H^1_0(\Omega)$, or $D^{1,2}_0(\Omega)$, and the operators $-\Delta + \kappa_i$ are assumed to be well defined and coercive in $H$.

The cubic system \eqref{eq:system} in $\mathbb{R}^3$ with $\alpha_{ij}=\beta_{ij}=2$ arises as a model in many physical phenomena, for example, in the study of standing waves for a mixture of Bose-Einstein condensates of $M$-hyperfine states which overlap in space. The sign of $\mu_i$ reflects the interaction of the particles within each single state, whereas that of $\lambda_{ij}$ reflects the interaction between particles in two different states. The interaction is attractive if the sign is positive, and it is repulsive if the sign is negative. The system is called competitive if, as we are assuming here, all of the $\lambda_{ij}$'s are negative. 

A solution $u_i$ to the equation 
$$-\Delta u + \kappa_iu = \mu_i|u|^{p-2}u,\qquad u\in H,$$
gives rise to a solution of the system \eqref{eq:system} whose $i$-th component is $u_i$ and all other components are trivial, i.e., $u_j=0$ if $j\neq i$. A solution with at least one trivial and one nontrivial component is called \emph{semitrivial}. We are interested in finding solutions all of whose components are nontrivial. These are called \emph{fully nontrivial} solutions. A fully nontrivial solution is said to be \emph{positive} if every component $u_i$ is nonnegative.

The main purpose of this paper is to exhibit a simple variational setting for finding fully nontrivial solutions to the system \eqref{eq:system}. Our approach is inspired by the ideas introduced by Szulkin and Weth in \cite{sw0,sw}.

We will show that the fully nontrivial solutions to \eqref{eq:system} correspond to the critical points of a $\mathcal{C}^1$-functional $\Psi:\mathcal{U}\to\mathbb{R}$ defined in an open subset $\mathcal{U}$ of the product $\mathcal{T}:=S_1\times\cdots\times S_M$ of unit spheres $S_i$ in $H$. The functional $\Psi$ tends to infinity at the boundary of $\mathcal{U}$ in $\mathcal{T}$, thus allowing the application of the usual descending gradient flow techniques to obtain existence and multiplicity of critical points.

This variational setting can be easily extended to systems whose coefficients $\kappa_i,\mu_i,\lambda_{ij}$ are functions defined in $\Omega$ and satisfying suitable assumptions. It may also be extended, with some care, to systems having more general nonlinearities. We chose to treat only the constant coefficient system \eqref{eq:system} in order to make the ideas more transparent.

Our abstract results (Theorems \ref{thm:A} and \ref{thm:B}) apply to many interesting types of systems. Here we consider the following three. 

Firstly, we consider the subcritical system
\begin{equation} \label{eq:exterior}
\begin{cases}
-\Delta u_i + \kappa_iu_i = \mu_i|u_i|^{p-2}u_i + \sum\limits_{j\neq i} \lambda_{ij}\beta_{ij}|u_j|^{\alpha_{ij}}|u_i|^{\beta_{ij}-2}u_i, \\
u_i\in H^1_0(\Omega),\qquad i,j=1,\ldots,M,
\end{cases}
\end{equation}
with $\kappa_i,\mu_i>0$, $\lambda_{ij}=\lambda_{ji}<0$, $\alpha_{ij},\beta_{ij}>1$, $\alpha_{ij}=\beta_{ji}$, and $\alpha_{ij}+\beta_{ij}=p\in(2,2^*)$, in an exterior domain $\Omega$ of $\mathbb{R}^N$ (i.e., $\mathbb{R}^N\smallsetminus\Omega$ is bounded, possibly empty), $N\geq 3$. 

We assume that $\Omega$ is invariant under the action of a closed subgroup $G$ of the group $O(N)$ of linear isometries of $\mathbb{R}^N$, and look for $G$-invariant solutions, i.e., solutions whose components are $G$-invariant.

Let $Gx:=\{gx:g\in G\}$ denote the $G$-orbit of $x\in\mathbb{R}^N$. We prove the following result.

\begin{theorem} \label{thm:symmetric_exterior}
If $\dim(Gx)>0$ for every $x\in\mathbb{R}^N\smallsetminus\{0\}$ and $\Omega$ is a $G$-invariant exterior domain in $\mathbb{R}^N$, then the system \eqref{eq:exterior} has an unbounded sequence of $G$-invariant fully nontrivial solutions. One of them is positive and has least energy among all $G$-invariant fully nontrivial solutions.
\end{theorem}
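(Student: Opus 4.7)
The plan is to apply the abstract Theorems \ref{thm:A} and \ref{thm:B} inside the $G$-invariant subspace. Write $H^G:=\{u\in H^1_0(\Omega):u(gx)=u(x)\text{ for all }g\in G,\ x\in\Omega\}$, $\mathcal{T}^G:=(S_1\cap H^G)\times\cdots\times(S_M\cap H^G)$, and $\mathcal{U}^G:=\mathcal{U}\cap\mathcal{T}^G$. The action $(g\cdot u)(x):=u(g^{-1}x)$ is a linear isometry of $H^1_0(\Omega)$ that leaves the energy functional and each constraint invariant, so Palais' principle of symmetric criticality applies: any critical point of $\Psi|_{\mathcal{U}^G}$ is a critical point of $\Psi$ on $\mathcal{U}$ and therefore corresponds to a $G$-invariant fully nontrivial solution of \eqref{eq:exterior}.

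The decisive technical ingredient is a compact embedding. Since $\Omega$ is an exterior domain, the ordinary embedding $H^1_0(\Omega)\hookrightarrow L^p(\Omega)$ fails to be compact; however, under the hypothesis $\dim(Gx)>0$ for every $x\ne 0$, every $G$-invariant function must spread its mass over positive-dimensional orbits, ruling out both escape to infinity and concentration. A classical argument (going back to P.-L. Lions and refined by Hebey--Vaugon) then yields compactness of $H^G\hookrightarrow L^p(\Omega)$ for every $p\in(2,2^*)$. Together with the coercivity of $-\Delta+\kappa_i$ in $H^1_0(\Omega)$, this delivers the Palais--Smale condition for $\Psi|_{\mathcal{U}^G}$, in addition to the blow-up of $\Psi$ on $\partial\mathcal{U}^G$ inside $\mathcal{T}^G$ that is already supplied by the abstract framework.

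With these ingredients, both conclusions follow. Minimizing $\Psi$ over $\mathcal{U}^G$ yields, via Theorem \ref{thm:A}, a $G$-invariant fully nontrivial critical point of least energy within this class; the invariance of $\Psi$ under componentwise sign changes $(u_1,\ldots,u_M)\mapsto(\pm u_1,\ldots,\pm u_M)$ allows one to take the minimizer componentwise nonnegative, and the strong maximum principle then gives strict positivity of each component, the negative coupling terms only contributing positively to the zeroth-order coefficient of the linearized equation. For the multiplicity statement, the same $(\mathbb{Z}/2)^M$-symmetry of $\Psi$ puts Theorem \ref{thm:B} in force: a Krasnoselskii genus (or Lusternik--Schnirelmann) minimax on $\mathcal{U}^G/(\mathbb{Z}/2)^M$, executed by the descending pseudo-gradient flow of $\Psi$, produces an unbounded sequence of critical values of $\Psi|_{\mathcal{U}^G}$.

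The main obstacle is the Palais--Smale condition, which typically fails for $\Psi$ on $\mathcal{U}$ over an unbounded domain and is recovered on $\mathcal{U}^G$ only through the $G$-invariant compact embedding above; the assumption $\dim(Gx)>0$ for all $x\ne 0$ is tailor-made to enforce this. Once this compactness is secured, everything else is a routine application of the abstract variational machinery set up earlier in the paper.
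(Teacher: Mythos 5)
Your overall structure matches the paper: restrict to the $G$-invariant subspace, invoke the principle of symmetric criticality, recover the Palais--Smale condition from a compact embedding $H^1_0(\Omega)^G\hookrightarrow L^p(\Omega)$ (which the paper proves directly via orbit separation plus Lions' lemma, but citing a classical Lions/Hebey--Vaugon type result is a fair substitute in a sketch), and then apply the abstract Theorems \ref{thm:A} and \ref{thm:B}. The least-energy and positivity claims are handled as in the paper.

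There is, however, one genuine gap. To extract an \emph{unbounded sequence} of solutions via Theorem \ref{thm:B}$(c)$ you need $\mathrm{genus}(\mathcal{U}^G)=\infty$; the Palais--Smale condition alone, together with evenness, produces nothing without this topological input. You assert that a genus-based minimax ``produces an unbounded sequence of critical values,'' but you never supply the reason that arbitrarily high-genus compact subsets exist inside $\mathcal{U}^G$. The paper addresses precisely this in Lemma \ref{lem:genus_exterior}: for any $k$, one chooses $kM$ nontrivial $G$-invariant functions in $H^1_0(\Omega)^G$ with pairwise disjoint supports (this is possible because $\Omega$ is an exterior domain, so one can place $G$-invariant ``annular bumps'' far out), then glues them into an explicit odd embedding of $\mathbb{S}^{k-1}$ into $\mathcal{U}^G$ using Proposition \ref{prop:m}$(a)$. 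Without this step the multiplicity claim in the theorem is unsupported. A secondary, minor point: the abstract framework uses only the diagonal $\mathbb{Z}/2$ (Theorem \ref{thm:A}$(v)$: $\Psi$ is even), not the full $(\mathbb{Z}/2)^M$ of componentwise sign changes, and the genus in Theorem \ref{thm:B} is the ordinary Krasnoselskii genus with respect to $u\mapsto -u$; your phrasing in terms of $\mathcal{U}^G/(\mathbb{Z}/2)^M$ does not line up with what Theorem \ref{thm:B} actually provides.
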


There is an extensive literature on subcritical systems in bounded domains and in the whole of $\mathbb{R}^3$. We refer to \cite{so} for a detailed account. Theorem \ref{thm:symmetric_exterior} seems to be the first existence result for the system \eqref{eq:exterior} in an exterior domain. A cubic system of two equations with variable coefficients in an expanding exterior domain was recently considered in \cite{ll}.

Our second application concerns the critical system
\begin{equation} \label{eq:yamabe}
\begin{cases}
-\Delta u_i = \mu_i|u_i|^{2^*-2}u_i + \sum\limits_{j\neq i} \lambda_{ij}\beta_{ij}|u_j|^{\alpha_{ij}}|u_i|^{\beta_{ij}-2}u_i, \\
u_i\in D^{1,2}(\mathbb{R}^N),\qquad i,j=1,\ldots,M,
\end{cases}
\end{equation}
where $N\geq 3$, $\mu_i>0$, $\lambda_{ij}=\lambda_{ji}<0$, $\alpha_{ij},\beta_{ij}>1$, $\alpha_{ij}=\beta_{ji}$, and $\alpha_{ij}+\beta_{ij}=2^*$.

We look for solutions which are invariant under the conformal action of the group $\Gamma:=O(m)\times O(n)$ on $\mathbb{R}^N$, with $m+n=N+1$ and $n,m\geq 2$, which is induced by the isometric action of $\Gamma$ on the standard $N$-dimensional sphere, by means of the stereographic projection. We prove the following result.

\begin{theorem} \label{thm:entire_critical}
The system \eqref{eq:yamabe} has an unbounded sequence of $\Gamma$-invariant fully nontrivial solutions. One of them is positive and has least energy among all $\Gamma$-invariant fully nontrivial solutions.
\end{theorem}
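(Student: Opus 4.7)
The plan is to deduce Theorem \ref{thm:entire_critical} from the abstract results (Theorems \ref{thm:A} and \ref{thm:B}), applied with $H=D^{1,2}(\r^N)$ equipped with the Dirichlet norm $\|u\|^2:=\irn|\nabla u|^2$. The group $\Gamma=O(m)\times O(n)$ acts isometrically on the unit sphere $S^N\subset\r^m\times\r^n$; transporting this action to $\r^N$ via stereographic projection yields a linear action on $D^{1,2}(\r^N)$ which is conformal and therefore preserves both the Dirichlet norm and every integral of the form $\irn|u|^{2^*}$. Consequently, $\Gamma$ leaves the product of spheres $\mathcal{T}=S_1\times\cdots\times S_M$, the open set $\mathcal{U}$, and the functional $\Psi$ invariant, and the principle of symmetric criticality identifies the $\Gamma$-invariant fully nontrivial solutions of \eqref{eq:yamabe} with the critical points of the restriction $\Psi|_{\mathcal{U}^\Gamma}$, where $\mathcal{U}^\Gamma:=\mathcal{U}\cap\mathcal{T}^\Gamma$.

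The crux of the proof is to recover compactness. Since $m,n\geq 2$, every $\Gamma$-orbit on $S^N$ has positive dimension; a standard symmetry-covering argument then gives the compact embedding $H^1(S^N)^\Gamma\hookrightarrow L^{2^*}(S^N)$. Pulling this back through stereographic projection and using the conformal identification of $D^{1,2}(\r^N)$ with (the non-constant part of) $H^1(S^N)$ yields a compact embedding $D^{1,2}(\r^N)^\Gamma\hookrightarrow L^{2^*}(\r^N)$. From this, the Palais--Smale condition for $\Psi|_{\mathcal{U}^\Gamma}$ at every level $c\in\r$ follows by the same arguments used in the subcritical setting of Theorem \ref{thm:symmetric_exterior}, now carried out within the $\Gamma$-fixed submanifold.

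With the (PS)$_c$ condition available at all levels, Theorem \ref{thm:A} furnishes a $\Gamma$-invariant fully nontrivial critical point of least $\Psi$-value. Replacing each component $u_i$ by $|u_i|$ does not increase $\Psi$ (because all coupling coefficients $\lambda_{ij}$ are negative and the diagonal nonlinearity is even), so the minimizer can be taken with nonnegative components; the strong maximum principle then upgrades it to a strictly positive solution, giving the positive least-energy $\Gamma$-invariant fully nontrivial solution claimed in the theorem. The unbounded sequence of solutions is produced by applying Theorem \ref{thm:B} to $\Psi|_{\mathcal{U}^\Gamma}$, exploiting the natural $(\z/2)^M$-action of sign changes together with a Krasnoselskii-genus / Lusternik--Schnirelmann argument on the infinite-dimensional $(\z/2)^M$-symmetric $\mathcal{C}^1$-manifold $\mathcal{U}^\Gamma$.

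The main obstacle is the compactness step: one must treat the $\Gamma$-action on $D^{1,2}(\r^N)$ via its conformal origin on $S^N$ rather than as a Euclidean isometric action, and verify carefully that both the Dirichlet inner product and all of the coupled $L^{2^*}$-integrals that define $\Psi$ are invariant under this non-isometric action of $\Gamma$ on $\r^N$. Once this conformal framework is set up, everything else is a direct transcription of the symmetric critical-point machinery already used for Theorem \ref{thm:symmetric_exterior}, with the sole new ingredient being the compact Sobolev embedding at the critical exponent for $\Gamma$-invariant functions.
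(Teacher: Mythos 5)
Your proposal follows essentially the same route as the paper: work in $H=D^{1,2}(\mathbb{R}^N)^\Gamma$, establish the compact embedding into $L^{2^*}(\mathbb{R}^N)$ via the conformal identification with $H^1(\mathbb{S}^N)^\Gamma$ (the paper cites \cite{cp} for this, which in turn rests on the positive-dimensional orbits ensured by $m,n\geq 2$), deduce the $(PS)_c$-condition for $\Psi$ at every level, and feed this into Theorem \ref{thm:B}. Two small points of divergence worth noting: the paper's genus is the ordinary $\mathbb{Z}/2$-Krasnoselskii genus (odd maps to spheres), not a $(\mathbb{Z}/2)^M$-equivariant index as your phrasing suggests; and, more substantively, the claim $\mathrm{genus}(\mathcal{U}^\Gamma)=\infty$ does not follow merely from infinite-dimensionality or abstract symmetry of $\mathcal{U}^\Gamma$, because $\mathcal{U}^\Gamma$ is a \emph{proper} open subset of $\mathcal{T}^\Gamma$. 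The paper verifies it by an explicit construction (Lemma \ref{lem:genus_critical}, same as Lemma \ref{lem:genus_exterior}): choose, for any $k$, families of $\Gamma$-invariant functions $u_{j,i}$ with pairwise disjoint supports, so that Proposition \ref{prop:m}$(a)$ places the normalized simplicial combinations inside $\mathcal{U}^\Gamma$, giving an odd map from $\mathbb{S}^{k-1}$ into $\mathcal{U}^\Gamma$. Your proposal should make this disjoint-supports step explicit rather than appeal to the size of the ambient manifold. Also, the maximum-principle upgrade to strict positivity is harmless but unnecessary: the paper's notion of ``positive'' means all components nonnegative, which Theorem \ref{thm:B}$(a)$ already provides by passing to $|u|$.
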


Theorem \ref{thm:entire_critical} extends some earlier results obtained in \cite{cf,cp} for a system of two equations; see also \cite{glw}. Existence and multiplicity results for the purely critical system in a bounded domain may be found in \cite{cf,ps,pst}. Supercritical systems were recently considered in \cite{cc}.

Finally, we consider the critical system
\begin{equation} \label{eq:bn}
\begin{cases}
-\Delta u_i + \kappa_i u_i = \mu_i|u_i|^{2^*-2}u_i + \sum\limits_{j\neq i} \lambda_{ij}\beta_{ij}|u_j|^{\alpha_{ij}}|u_i|^{\beta_{ij}-2}u_i, \\
u_i\in D^{1,2}_0(\Omega),\qquad i,j=1,\ldots,M,
\end{cases}
\end{equation}
where $\Omega$ is a bounded domain with $\mathcal{C}^2$-boundary in $\mathbb{R}^N$, $N\geq 4$, $\kappa_i\in(-\lambda_1(\Omega),0)$, $\mu_i>0$, $\lambda_{ij}=\lambda_{ji}<0$, $\alpha_{ij},\beta_{ij}>1$, $\alpha_{ij}=\beta_{ji}$, and $\alpha_{ij}+\beta_{ij}=2^*$. As usual, $\lambda_1(\Omega)$ denotes the first Dirichlet eigenvalue of $-\Delta$ in $\Omega$.  

We prove the following result.

\begin{theorem} \label{thm:bn}
Let $N\geq 4$. Assume that $\min\{\alpha_{ij},\beta_{ij}\}\geq\frac{4}{3}$ if $N=5$ and that $\alpha_{ij}=\beta_{ij}=2$ if $N=4$, for all $i,j=1,\ldots,M$. Then, the system \eqref{eq:bn} has a positive least energy fully nontrivial solution.
\end{theorem}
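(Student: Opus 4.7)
The plan is to invoke the abstract variational framework developed earlier in the paper (Theorems \ref{thm:A}--\ref{thm:B}), taking $H=D^{1,2}_0(\Omega)=H^1_0(\Omega)$ (the two spaces coincide since $\Omega$ is bounded, and the coercivity of $-\Delta+\kappa_i$ holds because $\kappa_i>-\lambda_1(\Omega)$). Fully nontrivial solutions of \eqref{eq:bn} then correspond to critical points of the $\mathcal{C}^1$-functional $\Psi:\mathcal{U}\to\mathbb{R}$, and the candidate ground state level is $c:=\inf_{\mathcal{U}}\Psi$.

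First I would identify the compactness threshold $c_*$ below which $\Psi$ satisfies the Palais--Smale condition. A Struwe-type decomposition of a PS sequence for $\Psi$ shows that non-compactness is caused by Talenti bubbles associated with some subset of components blowing up at interior points or escaping at the boundary; the corresponding limit profiles are fully nontrivial least energy solutions of the critical system \eqref{eq:yamabe} restricted to the non-vanishing indices (whose energies are supplied by Theorem \ref{thm:entire_critical}). Let $c_*$ denote the smallest such blow-up level.

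The decisive step is to prove the strict inequality $c<c_*$. Following Brezis--Nirenberg, let $(\omega_1,\ldots,\omega_M)$ be a positive least energy fully nontrivial solution of the limit system \eqref{eq:yamabe}; produce a concentrating family $(u_{1,\varepsilon},\ldots,u_{M,\varepsilon})$ by translating, rescaling and truncating with a cutoff supported in $\Omega$, then project onto $\mathcal{U}$ via the (unique) positive scaling furnished by the abstract framework. Expanding $\Psi$ along this family yields the leading term $c_*$ plus a negative correction coming from $\kappa_i\int u_{i,\varepsilon}^{2}$, of order $\varepsilon^{2}$ for $N\geq 5$ and of order $\varepsilon^{2}|\log\varepsilon|$ for $N=4$. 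The exponent hypotheses enter precisely when estimating the cross-coupling integrals $\int|u_{j,\varepsilon}|^{\alpha_{ij}}|u_{i,\varepsilon}|^{\beta_{ij}}$ generated both by the projection step and by the evaluation of $\Psi$: for $N=5$, the condition $\min\{\alpha_{ij},\beta_{ij}\}\geq 4/3$ forces these terms to be $o(\varepsilon^{2})$; for $N=4$, the choice $\alpha_{ij}=\beta_{ij}=2$ is what keeps them subcritical with respect to the $\varepsilon^{2}|\log\varepsilon|$ scale. The case $N=3$ is famously beyond reach, hence the hypothesis $N\geq 4$.

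Once $c<c_*$ is established, Ekeland's variational principle applied on the complete $\mathcal{C}^1$-manifold $\mathcal{T}$ produces a Palais--Smale sequence for $\Psi$ at level $c$ which, by compactness below $c_*$, converges (up to a subsequence) to a critical point $(u_1,\ldots,u_M)\in\mathcal{U}$ of $\Psi$. Positivity is obtained by replacing each $u_i$ with $|u_i|$, which preserves both the constraints defining $\mathcal{U}$ and the value of $\Psi$ (every nonlinear term in \eqref{eq:bn} involves only even powers of the components), and then invoking the strong maximum principle component by component. The main obstacle is clearly the Brezis--Nirenberg-type bubble estimate: balancing the gain from $\kappa_i<0$ against the loss from the critical cross-interaction terms is exactly what dictates the dimensional and exponent restrictions in the theorem's statement.
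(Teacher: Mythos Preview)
Your proposal has a genuine gap in both the identification of the compactness threshold and the construction of the test configuration.

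First, the threshold. In the paper's argument (Lemma~\ref{lem:bn_compactness}) the obstruction to compactness of a minimizing sequence on $\mathcal{N}$ is that some subset $I$ of components converges weakly to $0$; the remaining components converge to a solution of the \emph{subsystem} $(\mathscr{S}_I)$ of \eqref{eq:bn} \emph{in $\Omega$}, while each vanishing component contributes at least $\tfrac{1}{N}\mu_i^{-(N-2)/2}S^{N/2}$. Hence the correct threshold is
\[
\min_{\emptyset\neq I\subset\{1,\ldots,M\}}\Bigl\{c_I+\tfrac{1}{N}\textstyle\sum_{i\in I}\mu_i^{-(N-2)/2}S^{N/2}\Bigr\},
\]
where $c_I$ is the least energy of the $(M-|I|)$-equation Brezis--Nirenberg system in $\Omega$. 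There is no role here for coupled entire solutions of \eqref{eq:yamabe}, and Theorem~\ref{thm:entire_critical} (which produces only $\Gamma$-invariant solutions at a strictly higher level) does not supply this number.

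Second, the test configuration you propose does not exist: the paper shows (Proposition at the beginning of Subsection~\ref{subsec:yamabe}) that $\inf_{\mathcal{N}}\mathcal{J}$ for the system \eqref{eq:yamabe} equals $\tfrac{1}{N}\sum_i\mu_i^{-(N-2)/2}S^{N/2}$ and is \emph{not attained}. So there is no positive least energy fully nontrivial $(\omega_1,\ldots,\omega_M)$ to concentrate. Even replacing it by $M$ disjoint single bubbles would only push the energy below the threshold corresponding to $I=\{1,\ldots,M\}$, not below the sharper thresholds with $|I|=1$, which are the ones that actually bind.

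The paper's route is different and is what makes the estimate go through: one argues by \emph{induction on $M$}. Assuming the result for $M-1$ equations, one takes the positive least energy solution $(u_1,\ldots,u_{M-1})$ of $(\mathscr{S}_{\{M\}})$ in $\Omega$ and inserts a single Brezis--Nirenberg bubble $w_\varepsilon$ in the $M$-th slot, then projects onto $\mathcal{N}$ via Lemma~\ref{lem:maximality}. Because the $u_i$ are fixed bounded functions on $\overline{\Omega}$, the only cross terms to control are $\int_\Omega|w_\varepsilon|^{\alpha_{iM}}|u_i|^{\beta_{iM}}\le C\,|w_\varepsilon|_{\alpha_{iM}}^{\alpha_{iM}}$, and the exponent hypotheses ($\min\{\alpha_{ij},\beta_{ij}\}\ge\tfrac{4}{3}$ for $N=5$, $\alpha_{ij}=\beta_{ij}=2$ for $N=4$) are exactly what make these $o(\varepsilon^2)$ (resp.\ dominated by $|\kappa_M|\,|w_\varepsilon|_2^2$ after choosing the bubble center near $\partial\Omega$). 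This yields the strict inequality of Lemma~\ref{lem:bn_estimate}, and Theorem~\ref{thm:B}$(a)$ then gives the positive least energy fully nontrivial solution.
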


Note that there is no condition on $\alpha_{ij},\beta_{ij}$, other than $\alpha_{ij},\beta_{ij}>1$ and $\alpha_{ij}+\beta_{ij}=2^*$, if $N\ge 6$. 

Theorem \ref{thm:bn} extends some earlier results obtained in \cite{cz1,cz2} for a system of two equations. Multiple positive solutions were constructed in \cite{pt} when $N=4$, and the existence of infinitely many sign-changing solutions was established in \cite{llw} when $N\geq 7$ and $\alpha_{ij}=\beta_{ij}=\frac{2^*}2$; see also \cite{ppw}.

Our variational approach is based on some elementary properties of a certain function in $M$ variables, which are established in Section \ref{sec:polynomial_system}. In Section \ref{sec:variational} we introduce our variational setting and we derive some abstract results concerning the existence and multiplicity of fully nontrivial solutions to the system \eqref{eq:system}. Section \ref{sec:applications} is devoted to the proof of Theorems \ref{thm:symmetric_exterior}, \ref{thm:entire_critical} and \ref{thm:bn}. 
\section{On a function in \textit{M} variables} \label{sec:polynomial_system}

Let $J:(0,\infty)^M\to\mathbb{R}$ be the function given by
$$J(s):= \sum_{i=1}^Ma_is_i^2 - \sum_{i=1}^Mb_is_i^p + \sum_{i\neq j}d_{ij}s_j^{\alpha_{ij}}s_i^{\beta_{ij}},$$
where $s=(s_1,\ldots,s_M)$, $a_i,b_i>0$, $d_{ij}\geq 0$, $d_{ij}=d_{ji}$, $\alpha_{ij},\beta_{ij}>1$, $\alpha_{ij}+\beta_{ij}=p>2$, and $\alpha_{ji}=\beta_{ij}$. Then, for $ i=1,\ldots, M$,
\begin{align}
\partial_i J(s) & = 2a_is_i-pb_is_i^{p-1} + \sum_{j\neq i}d_{ij}\beta_{ij}s_j^{\alpha_{ij}}s_i^{\beta_{ij}-1} + \sum_{j\neq i}d_{ji}\alpha_{ji}s_i^{\alpha_{ji}-1}s_j^{\beta_{ji}}  \label{eq:partial_s} \\
& = 2a_is_i-pb_is_i^{p-1} + 2\sum_{j\neq i}d_{ij}\beta_{ij}s_j^{\alpha_{ij}}s_i^{\beta_{ij}-1}. \nonumber
\end{align}

\begin{lemma} \label{lem:maximality} 
If $pb_i > 2\sum_{j\neq i}d_{ij}\beta_{ij}$ for all $i=1,\ldots,M$, then there exist $0<r<R<\infty$ such that
\begin{equation} \label{eq:maximality}
\max_{s\in (0,\infty)^M}J(s) = \max_{s\in [r,R]^M}J(s).
\end{equation}
In particular, $J$ attains its maximum on $(0,\infty)^M$.
\end{lemma}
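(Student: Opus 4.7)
The plan is to show that $J$ attains its supremum on $(0,\infty)^M$ at some interior point $s^*$, and then take any $r\in(0,\min_is^*_i]$ and $R\geq\max_is^*_i$.

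The first step is to bound $J$ from above by a sum of one-variable functions. By Young's inequality, $s_j^{\alpha_{ij}}s_i^{\beta_{ij}}\leq\frac{\alpha_{ij}}{p}s_j^p+\frac{\beta_{ij}}{p}s_i^p$; summing and using $d_{ij}=d_{ji}$ together with $\alpha_{ji}=\beta_{ij}$, the coefficient of $s_k^p$ in $\sum_{i\neq j}d_{ij}s_j^{\alpha_{ij}}s_i^{\beta_{ij}}$ collapses to $\tfrac{2}{p}\sum_{j\neq k}d_{kj}\beta_{kj}$. Therefore
$$J(s)\leq\sum_{k=1}^M\phi_k(s_k),\qquad\phi_k(t):=a_kt^2-\tfrac{c_k}{p}t^p,\quad c_k:=pb_k-2\sum_{j\neq k}d_{kj}\beta_{kj},$$
with $c_k>0$ by hypothesis. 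Each $\phi_k$ is bounded above on $[0,\infty)$ and satisfies $\phi_k(t)\to-\infty$ as $t\to\infty$. Fix any reference point $s_0\in(0,\infty)^M$ and choose $R>0$ large enough that $\phi_k(R)+\sum_{j\neq k}\sup_{t\geq 0}\phi_j(t)<J(s_0)$ for every $k$; the bound above then gives $J(s)<J(s_0)$ whenever some coordinate $s_k\geq R$. Hence $\sup_{(0,\infty)^M}J=\sup_{(0,R)^M}J=\sup_{[0,R]^M}J$ by continuity of $J$ on $[0,R]^M$, and this supremum is attained by compactness at some $s^*\in[0,R]^M$.

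The crucial step is to show $s^*_i>0$ for every $i$. Suppose instead $s^*_i=0$. Replacing the $i$-th coordinate of $s^*$ by $t>0$ (keeping the others fixed) yields
$$J(s^*_1,\ldots,s^*_{i-1},t,s^*_{i+1},\ldots,s^*_M)-J(s^*)=a_it^2-b_it^p+2\sum_{j\neq i}d_{ij}(s^*_j)^{\alpha_{ij}}t^{\beta_{ij}}.$$
The only negative contribution is $-b_it^p$, which is of order $t^p$ with $p>2$, while $a_it^2>0$ and every cross term is nonnegative of order $t^{\beta_{ij}}$ with $\beta_{ij}>1$. For $t$ small enough the right-hand side is therefore strictly positive, contradicting the maximality of $s^*$. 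Setting $r:=\min_is^*_i>0$ gives $s^*\in[r,R]^M$, and
$$\max_{[r,R]^M}J\geq J(s^*)=\sup_{(0,\infty)^M}J\geq\max_{[r,R]^M}J,$$
which proves \eqref{eq:maximality}.

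I expect the perturbation argument in the third step to be the main subtlety, since it is where the strict positivity of the coefficients $a_i$ (not merely the smallness of the coupling coefficients $d_{ij}$) enters essentially; the remaining steps reduce to Young's inequality and a standard compactness/continuity argument.
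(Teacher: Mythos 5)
Your proof is correct, and it follows a genuinely different route from the paper's. The paper fixes $r$ and $R$ from the coefficients alone, then shows directly from \eqref{eq:partial_s} that $\partial_i J(s)>0$ whenever $s_i\le r$, and $\partial_i J(s)<0$ whenever $s_i\ge R$ and $s_i=\max_j s_j$ (bounding $s_j^{\alpha_{ij}}s_i^{\beta_{ij}-1}\le s_i^{p-1}$); this sign information alone confines the maximum to $[r,R]^M$. You instead decouple the variables with Young's inequality to get the one-dimensional upper bound $J(s)\le\sum_k\phi_k(s_k)$, deduce coercivity and hence existence of a maximizer $s^*\in[0,R]^M$, and then rule out $s^*_i=0$ by the perturbation $t\mapsto a_it^2-b_it^p+2\sum_{j\ne i}d_{ij}(s^*_j)^{\alpha_{ij}}t^{\beta_{ij}}$, which is positive for small $t>0$ because $p>2$ and the cross terms are nonnegative. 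Both arguments use the hypothesis $pb_i>2\sum_{j\ne i}d_{ij}\beta_{ij}$ essentially. The one thing the paper's version buys that yours does not, and which is used later in the proof of Lemma \ref{lem:stability}, is that its $r$ and $R$ depend only on the coefficients and hence remain valid, uniformly, for all nearby coefficients; your $r=\min_i s^*_i$ depends on the maximizer itself, so you would need an extra continuity argument to get that uniformity. A small imprecision in your step 2: you write ``$\phi_k(R)+\sum_{j\ne k}\sup\phi_j<J(s_0)$'' and then use $\phi_k(s_k)\le\phi_k(R)$ for $s_k\ge R$; this requires $\phi_k$ to be nonincreasing on $[R,\infty)$, so you should also take $R$ past the critical point of each $\phi_k$ (or replace $\phi_k(R)$ by $\sup_{t\ge R}\phi_k(t)$, which tends to $-\infty$ as $R\to\infty$).
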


\begin{proof}
Fix $R>r>0$ such that, for all $i=1,\ldots,M$,
\begin{equation*} 
2a_{i}t - \left(pb_i - 2\sum_{j\neq i}d_{ij}\beta_{ij}\right)t^{p-1} < 0 \qquad \text{if } t\in[R,\infty)
\end{equation*}
and
\begin{equation*} 
2a_{i}t-pb_{i}t^{p-1}>0\qquad \text{if }t\in(0,r].
\end{equation*}
Let $s=(s_1,\ldots,s_M)\in (0,\infty)^M$. If $s_i\ge R \text{ and } s_i=\max\{s_1,\ldots,s_M\}$, we have that
\begin{equation} \label{eq:-1a}
\partial_iJ(s) \le 2a_{i}s_i - \left(pb_i - 2\sum_{j\neq i}d_{ij}\beta_{ij}\right)s_i^{p-1} < 0,
\end{equation}
whereas, if $s_i\leq r$, then
\begin{equation} \label{eq:pos1}
\partial_i J(s) \ge 2a_{i}s_i-pb_{i}s_i^{p-1}>0.
\end{equation}
Therefore \eqref{eq:maximality} holds true.
\end{proof}

\begin{lemma} \label{lem:uniqueness}
If $J$ has a critical point in $(0,\infty)^M$, then it is unique and it is a global maximum of $J$ in $(0,\infty)^M$.
\end{lemma}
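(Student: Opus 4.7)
The statement has two parts---uniqueness of the critical point and its being a global maximum---and I would tackle them in that order, using a ``maximum ratio'' argument for the first and a rescaling trick combined with Lemma~\ref{lem:maximality} for the second.

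For uniqueness, suppose $s,s'\in(0,\infty)^M$ are both critical points. Set $\lambda:=\max_i s_i/s'_i$; after exchanging $s$ and $s'$ if necessary, I may assume $\lambda\ge 1$. Pick an index $k$ realizing this maximum, so $s_k=\lambda s'_k$ and $s_j\le\lambda s'_j$ for every $j$. Using \eqref{eq:partial_s} at $s$ and $s'$ in the form $\partial_kJ(s)/s_k=0=\partial_kJ(s')/s'_k$ and subtracting, I obtain
\begin{equation*}
pb_k\bigl[s_k^{p-2}-(s'_k)^{p-2}\bigr]=2\sum_{j\ne k}d_{kj}\beta_{kj}\bigl[s_j^{\alpha_{kj}}s_k^{\beta_{kj}-2}-(s'_j)^{\alpha_{kj}}(s'_k)^{\beta_{kj}-2}\bigr].
\end{equation*}
The identity $\alpha_{kj}+\beta_{kj}=p$ combined with $s_k=\lambda s'_k$ and $s_j\le\lambda s'_j$ gives the key bound $s_j^{\alpha_{kj}}s_k^{\beta_{kj}-2}\le\lambda^{p-2}(s'_j)^{\alpha_{kj}}(s'_k)^{\beta_{kj}-2}$, so on both sides a common factor $\lambda^{p-2}-1$ can be extracted. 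If $\lambda>1$, I divide it out and then use \eqref{eq:partial_s} at $s'$ once more to bring in $2a_k$, reaching $2a_k\le 0$---contradicting $a_k>0$. Hence $\lambda=1$, i.e.\ $s_i\le s'_i$ for every $i$; swapping the roles of $s$ and $s'$ yields the reverse inequality, so $s=s'$.

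For the global maximum, let $s^*$ be the unique critical point and set $\tilde J(t):=J(t_1s^*_1,\ldots,t_Ms^*_M)$ on $(0,\infty)^M$. A direct expansion shows that $\tilde J$ has exactly the same form as $J$, with new coefficients $\tilde a_i=a_i(s^*_i)^2$, $\tilde b_i=b_i(s^*_i)^p$, $\tilde d_{ij}=d_{ij}(s^*_j)^{\alpha_{ij}}(s^*_i)^{\beta_{ij}}$, still satisfying $\tilde d_{ij}=\tilde d_{ji}$. By construction $(1,\ldots,1)$ is a critical point of $\tilde J$, and evaluating the critical-point equation there gives $p\tilde b_i=2\tilde a_i+2\sum_{j\ne i}\tilde d_{ij}\beta_{ij}>2\sum_{j\ne i}\tilde d_{ij}\beta_{ij}$, which is precisely the hypothesis of Lemma~\ref{lem:maximality} for $\tilde J$. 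That lemma therefore produces a global maximum of $\tilde J$ on $(0,\infty)^M$, which is in particular a critical point; by the uniqueness already proved, this maximum must occur at $(1,\ldots,1)$. Translating back, $J(s^*)=\max_{(0,\infty)^M}J$.

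The main obstacle is the uniqueness step, and specifically the choice of $k$ maximizing the \emph{ratio} $s_k/s'_k$ (rather than any single coordinate): only this choice forces $s_j\le\lambda s'_j$ for every $j\ne k$, and only then does the exponent identity $\alpha_{kj}+\beta_{kj}=p$ let the factor $\lambda^{p-2}$ pop out uniformly across the coupling sum. Once uniqueness is in hand, the rescaling that sends $s^*$ to $(1,\ldots,1)$ turns the coercivity hypothesis of Lemma~\ref{lem:maximality} into the automatic positivity of $\tilde a_i$, so the global-maximum part is almost free.
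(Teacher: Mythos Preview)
Your proof is correct and follows essentially the same route as the paper. The only difference is organizational: the paper first rescales so that one critical point is $(1,\ldots,1)$ and then compares any other critical point to it by looking at the largest (or smallest) coordinate, whereas your ``maximum ratio'' argument performs that normalization and comparison in a single step; for the global-maximum part both arguments rescale and invoke Lemma~\ref{lem:maximality} in the same way.
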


\begin{proof}
Assume first that $(1,\ldots,1)$ is a critical point of $J$. Then, from \eqref{eq:partial_s} we get that
\begin{equation} \label{eq:coef}
0<2a_{i} = pb_i - 2\sum_{j\neq i}d_{ij}\beta_{ij}  \qquad \text{for all } i=1,\ldots,M.
\end{equation}
If $s=(s_1,\ldots,s_M)$ is a critical point of $J$ in $(0,\infty)^M$, then, for each $i=1,\ldots,M$, \eqref{eq:partial_s} and \eqref{eq:coef} yield
\begin{equation} \label{eq:s}
2a_i(s_i - s_i^{p-1}) = 2\sum_{j\neq i}d_{ij}\beta_{ij}(s_i^{p-1} - s_j^{\alpha_{ij}}s_i^{\beta_{ij}-1}).
\end{equation}
Arguing by contradiction, assume that $s\ne(1,\ldots,1)$. We consider two cases. Suppose first that $s_i>1$ for some $i$. We may assume without loss of generality that $s_i\ge s_j$ for all $j$. Then, the left-hand side in \eqref{eq:s} is negative whereas the right-hand side is $\ge 0$. This is a contradiction. Now suppose that $s_i<1$ for some $i$. Again, we may assume that $s_i\le s_j$ for all $j$. Now the left-hand side in \eqref{eq:s} is positive while the right-hand side is not, a contradiction again. Hence $(1,\ldots,1)$ is the only critical point of $J$ in $(0,\infty)^M$. The inequalities \eqref{eq:coef} allow us to apply Lemma \ref{lem:maximality} to conclude that $(1,\ldots,1)$ is a global maximum.

Now, if $s^0=(s_1^0,\ldots,s_M^0)$ is a critical point of $J$ in $(0,\infty)^M$, then $(1,\ldots,1)$ is a critical point of
$$\bar J(s):= \sum_{i=1}^M \bar a_is_i^2 - \sum_{i=1}^M \bar b_is_i^p + \sum_{i\neq j} \bar d_{ij}s_j^{\alpha_{ij}}s_i^{\beta_{ij}},$$
where $\bar a_i:=a_i s_i^0$, $\bar b_i :=b_i (s_i^0)^{p-1}$ and $\bar d_{ij} :=d_{ij} (s_j^0)^{\alpha_{ij}}(s_i^0)^{\beta_{ij}-1}$, and the conclusion follows from the special case considered above.
\end{proof}

\begin{lemma} \label{lem:stability}
Assume that $J$ has a critical point $s^0$ in $(0,\infty)^M$. Then, for each $\eps>0$, there exists $\delta>0$ such that, if $\tilde d_{ij}\geq 0$ for all $i\neq j$ and
\begin{equation} \label{eq:delta}
\sum_{i=1}^M(|\tilde a_i-a_i|+|\tilde b_i-b_i|) + \sum_{i\neq j}|\tilde d_{ij}-d_{ij}| < \delta,
\end{equation}
then the function
$$
\tilde J(s) := \sum_{i=1}^M \tilde a_is_i^2 - \sum_{i=1}^M \tilde b_is_i^p + \sum_{i\neq j} \tilde d_{ij}s_j^{\alpha_{ij}}s_i^{\beta_{ij}},
$$
has a unique critical point $\tilde s^0$ in $(0,\infty)^M$ which is a global maximum and satisfies $|\tilde s^0 - s^0|<\eps$.
\end{lemma}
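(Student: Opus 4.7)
The plan is to reduce to the case $s^0=(1,\ldots,1)$ by a coordinate rescaling, and then to combine Lemmas \ref{lem:maximality} and \ref{lem:uniqueness} with a compactness argument. Consider the change of variables $s_i\mapsto s_i/s_i^0$: as used at the end of the proof of Lemma \ref{lem:uniqueness}, $\bar{J}(s):=J(s_1s_1^0,\ldots,s_Ms_M^0)$ has exactly the form of $J$ with coefficients $\hat a_i=a_i(s_i^0)^2$, $\hat b_i=b_i(s_i^0)^p$, $\hat d_{ij}=d_{ij}(s_j^0)^{\alpha_{ij}}(s_i^0)^{\beta_{ij}}$, and $(1,\ldots,1)$ is the (unique) critical point of $\bar{J}$. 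Apply the same rescaling to $\tilde J$ to obtain $\bar{\tilde J}$ with coefficients $\hat{\tilde a}_i,\hat{\tilde b}_i,\hat{\tilde d}_{ij}$; the $\delta$-condition \eqref{eq:delta} translates into the same kind of bound with $\delta$ replaced by $C(s^0)\delta$, where $C(s^0)$ depends only on $s^0$ and the exponents. In particular, the rescaled coefficients converge to $\hat a_i,\hat b_i,\hat d_{ij}$ uniformly as $\delta\to0$.

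Since $(1,\ldots,1)$ is a critical point of $\bar{J}$, identity \eqref{eq:coef} gives $p\hat b_i-2\sum_{j\ne i}\hat d_{ij}\beta_{ij}=2\hat a_i>0$. By continuity, for $\delta$ sufficiently small the same strict inequality persists for the perturbed coefficients, say with $2\hat a_i$ replaced by $\hat a_i$; moreover $\hat{\tilde a}_i$ and $\hat{\tilde b}_i$ stay in a fixed compact subset of $(0,\infty)$. Inspecting the proof of Lemma \ref{lem:maximality}, one sees that $r$ and $R$ can then be chosen \emph{uniformly} in all such perturbations, so that $\bar{\tilde J}$ attains its maximum on $(0,\infty)^M$ at a point in $[r,R]^M$. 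The strict sign of the partial derivatives on $\partial[r,R]^M$ (from \eqref{eq:-1a} and \eqref{eq:pos1} applied to $\bar{\tilde J}$) places this maximum in the interior, so it is a critical point of $\bar{\tilde J}$; Lemma \ref{lem:uniqueness} then guarantees that it is the only one, and that it is a global maximum.

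Denote this critical point by $\bar{\tilde s}^0=\bar{\tilde s}^0(\delta)$. To conclude, I show $\bar{\tilde s}^0\to(1,\ldots,1)$ as $\delta\to0$ by compactness: if $\delta_n\to0$ and $\bar{\tilde s}^0(\delta_n)$ stays at distance $\ge\eps_0>0$ from $(1,\ldots,1)$, then, being trapped in $[r,R]^M$, a subsequence converges to some $\bar s^\infty\in[r,R]^M\setminus\{(1,\ldots,1)\}$. Since the coefficients of $\bar{\tilde J}$ converge to those of $\bar{J}$, passing to the limit in $\partial_i\bar{\tilde J}(\bar{\tilde s}^0(\delta_n))=0$ yields $\partial_i\bar{J}(\bar s^\infty)=0$, contradicting the uniqueness in Lemma \ref{lem:uniqueness}. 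Undoing the rescaling transfers this to $\tilde s^0\to s^0$, giving $|\tilde s^0-s^0|<\eps$ once $\delta$ is small enough.

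The main obstacle I foresee is verifying that the constants $r,R$ in the proof of Lemma \ref{lem:maximality} can be taken uniformly in the perturbation. This reduces to keeping the quantity $p\hat{\tilde b}_i-2\sum_{j\ne i}\hat{\tilde d}_{ij}\beta_{ij}$ bounded away from zero and the coefficients $\hat{\tilde a}_i,\hat{\tilde b}_i$ away from $0$ and $\infty$, all of which follow from \eqref{eq:delta} combined with the positivity in \eqref{eq:coef} for $\bar{J}$. Once this uniformity is in hand, everything else is a routine application of the previous two lemmas.
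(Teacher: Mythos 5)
Your proof follows essentially the same route as the paper's: reduce to $s^0=(1,\ldots,1)$ by rescaling, use \eqref{eq:coef} to see that the hypotheses of Lemma \ref{lem:maximality} persist for small $\delta$ with uniform $r,R$, then invoke Lemmas \ref{lem:maximality} and \ref{lem:uniqueness} to get a unique global maximum $\tilde s^0\in[r,R]^M$. The only difference is that where the paper dismisses the final convergence $\tilde s^0\to s^0$ with ``it is easy to see,'' you supply a clean compactness argument (extract a subsequence, pass to the limit in $\partial_i\tilde J=0$, and contradict uniqueness), which is a welcome filling-in of that step.
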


\begin{proof}
As in the proof of Lemma \ref{lem:uniqueness}, we may assume without loss of generality that $s^0=(1,\ldots,1)$. Then, \eqref{eq:coef} holds true. So, choosing $\delta>0$ small enough, we have that $\tilde a_i,\, \tilde b_i > 0$ and  $p\tilde b_i - 2\sum_{j\neq i}\tilde d_{ij}\beta_{ij} >0$ if \eqref{eq:delta} is satisfied. Thus, by Lemma \ref{lem:maximality}, $\tilde J$ has a global maximum $\tilde s^0$ in $(0,\infty)^M$ and, by Lemma \ref{lem:uniqueness}, it is the only critical point of $\tilde J$ in $(0,\infty)^M$.

Taking smaller $\delta,r>0$ and a larger $R>r$ if necessary, we have that $\tilde{J}$ satisfies the same inequalities and, therefore, $\tilde s^0\in (r,R)^M$. Since $(1,\ldots,1)$ is a strict maximum, it is easy to see that $|\tilde s^0 - (1,\ldots,1)|<\eps$, possibly after choosing a still smaller $\delta$.
\end{proof}

\section{The variational setting} \label{sec:variational}

The results of this section also apply to the case $N=1$ or 2 and $p\in(2,\infty)$.

Let $H$ be either $H^1_0(\Omega)$ or $D^{1,2}_0(\Omega)$ and, for $v,w\in H$, set
$$\langle v,w\rangle_i:=\int_\Omega(\nabla v\cdot\nabla w+\kappa_ivw) \qquad\text{and}\qquad \|v\|_i:=\left(\int_\Omega(|\nabla v|^2+\kappa_iv^2)\right)^{1/2}.$$
Since, by assumption, the operators $-\Delta + \kappa_i$ are well defined and coercive in $H$, we have that $\|\cdot\|_i$ is a norm in $H$, equivalent to the standard one.

Let $\mathcal{H}:=H^M$ with the norm
$$\|(u_1,\ldots,u_M)\|:=\left(\sum_{i=1}^M\|u_i\|_i^2\right)^{1/2},$$
and let $\mathcal{J}:\mathcal{H}\to\mathbb{R}$ be given by 
$$\mathcal{J}(u_1,\ldots,u_M) := \frac{1}{2}\sum_{i=1}^M\|u_i\|_i^2 - \frac{1}{p}\sum_{i=1}^M\int_\Omega\mu_i|u_i|^p - \frac{1}{2}\sum_{j\neq i}\int_\Omega\lambda_{ij}|u_j|^{\alpha_{ij}}|u_i|^{\beta_{ij}}.$$
This function is of class $\mathcal{C}^1$ and, since $\lambda_{ij}=\lambda_{ji}$ and $\beta_{ij}=\alpha_{ji}$, 
\begin{align*}
&\partial_i\mathcal{J}(u_1,\ldots,u_M)v=\langle u_i,v\rangle_i - \int_\Omega\mu_i|u_i|^{p-2}u_iv \\
&\qquad - \frac{1}{2}\sum_{j\neq i}\int_\Omega\lambda_{ij}\beta_{ij}|u_j|^{\alpha_{ij}}|u_i|^{\beta_{ij}-2}u_iv - \frac{1}{2}\sum_{j\neq i}\int_\Omega\lambda_{ji}\alpha_{ji}|u_i|^{\alpha_{ji}-2}u_iv|u_j|^{\beta_{ji}}\\
& = \langle u_i,v\rangle_i - \int_\Omega\mu_i|u_i|^{p-2}u_iv - \sum_{j\neq i}\int_\Omega\lambda_{ij}\beta_{ij}|u_j|^{\alpha_{ij}}|u_i|^{\beta_{ij}-2}u_iv,
\end{align*}
for each $v\in H$, $i=1,\ldots,M$. So the critical points of $\mathcal{J}$ are the solutions to the system \eqref{eq:system}. The fully nontrivial ones belong to the set
\[
\mathcal{N} := \{(u_1,\ldots,u_M)\in\mathcal{H}:u_i\neq 0, \;\partial_i\mathcal{J}(u_1,\ldots,u_M)u_i=0, \; \forall i=1,\ldots,M\}.
\]
This Nehari-type set was introduced in \cite{ctv}, and has been used in many works. Note that 
\begin{equation} \label{eq:J}
\mathcal{J}(u) = \frac{p-2}{2p}\sum_{i=1}^M\|u_i\|_i^2\qquad\text{if }u=(u_1,\ldots,u_M)\in\mathcal{N}.
\end{equation}
Given $u=(u_1,\ldots,u_M)\in\mathcal{H}$ and $s=(s_1,\ldots,s_M)\in(0,\infty)^M$, we write
$$su:=(s_1u_1,\ldots,s_Mu_M),$$
and we define $J_u:(0,\infty)^M\to\mathbb{R}$ by
$$J_u(s):=\mathcal{J}(su) = \sum_{i=1}^Ma_{u,i}s_i^2 - \sum_{i=1}^Mb_{u,i}s_i^p + \sum_{i\neq j}d_{u,ij}s_j^{\alpha_{ij}}s_i^{\beta_{ij}},$$
where
$$a_{u,i}:=\frac{1}{2}\|u_i\|_i^2,\qquad b_{u,i}:=\frac{1}{p}\int_\Omega\mu_i |u_i|^p,\qquad  d_{u,ij}:=-\frac{1}{2}\int_\Omega\lambda_{ij}|u_j|^{\alpha_{ij}}|u_i|^{\beta_{ij}}.$$
If $u_i\neq 0$ for all $i=1,\ldots,M$, then, as 
$$s_i\,\partial_i J_{u}(s)=\partial_i\mathcal{J}(su)[s_iu_i],\qquad i=1,\ldots,M,$$
we have that $s$ is a critical point of $J_u$ iff $su\in\mathcal{N}$. Define
\begin{align*}
\tilde{\mathcal{U}} :=& \{u\in\mathcal{H}: su\in\mathcal{N} \text{ for some }s\in (0,\infty)^M\} \\
 =&\{u\in (H\smallsetminus\{0\})^M:  J_u \text{ has a critical point in }(0,\infty)^M\}.
\end{align*}
By Lemma \ref{lem:uniqueness}, if $u\in (H\smallsetminus\{0\})^M$ and $J_u$ has a critical point in $(0,\infty)^M$, then this critical point is unique and it is a global maximum of $J_u$. We denote it by $s_u=(s_{u,1},\ldots,s_{u,M})$, and we define $\tilde{\mathfrak{m}}:\tilde{\mathcal{U}}\to\mathcal{N}$ by
$$\tilde{\mathfrak{m}}(u):=s_uu.$$
Then,
\begin{equation} \label{eq:maximum}
\mathcal{J}(\tilde{\mathfrak{m}}(u))=\max_{s\in (0,\infty)^M}\mathcal{J}(su).
\end{equation}
Let $S_i:=\{v\in H:\|v\|_i=1\}$, $\mathcal{T}:=S_1\times\cdots\times S_M$, $\mathcal{U}:=\tilde{\mathcal{U}}\cap\mathcal{T}$, and let $\mathfrak{m}:\mathcal{U}\to\mathcal{N}$ be the restriction of $\tilde{\mathfrak{m}}$ to $\mathcal{U}$. We write $\partial\mathcal{U}$ for the boundary of $\mathcal{U}$ in~$\mathcal{T}$.
 
\begin{proposition} \label{prop:m}
\begin{itemize}
\item[$(a)$]If $u=(u_1,\ldots,u_M)\in\mathcal{T}$ is such that $u_i$ and $u_j$ have disjoint supports for every $i\neq j$, then $u\in\mathcal{U}$. Hence $\mathcal{U}\ne\emptyset$. Moreover, $\cU$ is an open subset of $\mathcal{T}$.
\item[$(b)$]$\cU\neq \mathcal{T}$ if $-\lambda_{ij}\geq\max\{\frac{\mu_i}{\beta_{ij}},\,\frac{\mu_j}{\beta_{ji}}\}$ for some $i\neq j$.
\item[$(c)$]$\tilde{\mathfrak{m}}:\tilde{\mathcal{U}}\to\mathcal{N}$ is continuous, and $\mathfrak{m}:\mathcal{U}\to\mathcal{N}$ is a homeomorphism.
\item[$(d)$]There exists $d_0>0$ such that $\min_{i=1,\ldots,M}\|u_i\|_i\geq d_0$ if $(u_1,\ldots,u_M)\in \mathcal{N}$. Thus, $\mathcal{N}$ is a closed subset of $\mathcal{H}$.
\item[$(e)$]If $(u_n)$ is a sequence in $\cU$ such that $u_n\to u\in\partial\cU$, then $\|\mathfrak{m}(u_n)\| \to\infty$. 
\end{itemize}
\end{proposition}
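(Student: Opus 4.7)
The unifying observation is that the coefficient maps $u\mapsto a_{u,i}$, $u\mapsto b_{u,i}$, $u\mapsto d_{u,ij}$ are continuous on $\mathcal{H}$ via the Sobolev embedding $H\hookrightarrow L^p(\Omega)$. With this in hand, each of (a)--(e) reduces to a short assembly of Lemmas \ref{lem:maximality}--\ref{lem:stability} together with standard Nehari-type bookkeeping.

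For (a), disjoint supports force $d_{u,ij}=0$ for all $i\neq j$, so $J_u$ decouples into $M$ one-variable functions $s_i\mapsto a_{u,i}s_i^2-b_{u,i}s_i^p$ with $a_{u,i},b_{u,i}>0$, each having a unique positive maximum; hence $\mathcal{U}\neq\emptyset$, and openness is immediate from Lemma \ref{lem:stability}. For (b), my plan is to exhibit an explicit $u\in\mathcal{T}\smallsetminus\mathcal{U}$: pick a nontrivial $v\in H$ supported in a small ball and, for $k\neq i,j$, pick $u_k\in S_k$ supported in pairwise disjoint balls; set $u_i:=v/\|v\|_i$ and $u_j:=v/\|v\|_j$. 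Then $d_{u,mn}=0$ unless $\{m,n\}=\{i,j\}$, so $\partial_iJ_u(s)=\partial_jJ_u(s)=0$ involve only $s_i$ and $s_j$. After the substitution $\xi_k:=s_k/\|v\|_k$ and using $\alpha_{ij}+\beta_{ij}=p$ together with $\alpha_{ji}=\beta_{ij}$, the two equations rearrange to
\[
\mu_i\,\xi_i^{\alpha_{ij}}>(-\lambda_{ij})\beta_{ij}\,\xi_j^{\alpha_{ij}}\quad\text{and}\quad \mu_j\,\xi_j^{\alpha_{ji}}>(-\lambda_{ij})\beta_{ji}\,\xi_i^{\alpha_{ji}},
\]
which under the hypothesis $-\lambda_{ij}\geq\max\{\mu_i/\beta_{ij},\mu_j/\beta_{ji}\}$ force both $\xi_j<\xi_i$ and $\xi_i<\xi_j$, a contradiction.

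For (c), continuity of $\tilde{\mathfrak{m}}$ is exactly Lemma \ref{lem:stability} applied to the continuously varying coefficients; the candidate inverse of $\mathfrak{m}$ is the componentwise normalization $w\mapsto(w_i/\|w_i\|_i)_{i=1}^M$, whose continuity on $\mathcal{N}$ is secured by the lower bound from (d), and uniqueness (Lemma \ref{lem:uniqueness}) identifies the two compositions with the identity. For (d), testing the Nehari identity $\partial_i\mathcal{J}(u)u_i=0$ and discarding the mixed terms (which are $\leq 0$ since $\lambda_{ij}<0$) yields $\|u_i\|_i^2\leq\mu_i\int_\Omega|u_i|^p\leq C_i\|u_i\|_i^p$, hence $\|u_i\|_i\geq d_0$; closedness of $\mathcal{N}$ then follows from continuity of $\partial_i\mathcal{J}$ and preservation of this lower bound in the limit.

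Finally for (e), write $w_n:=\mathfrak{m}(u_n)=s_{u_n}u_n$, so that $\|w_n\|^2=\sum_i s_{u_n,i}^2$. Assuming this quantity bounded, I extract a subsequence with $s_{u_n}\to\bar s\in[0,\infty)^M$; since $s_{u_n,i}=\|w_{n,i}\|_i\geq d_0$ by (d), in fact $\bar s\in(0,\infty)^M$. Continuity of the coefficient maps gives $\partial_iJ_{u_n}(s_{u_n})\to\partial_iJ_u(\bar s)=0$, whence $u\in\tilde{\mathcal{U}}$; since $\|u_i\|_i=\lim\|u_{n,i}\|_i=1$ we have $u\in\mathcal{U}$, contradicting $u\in\partial\mathcal{U}$. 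I expect the main obstacle to be (b): pinpointing the scaling $\xi_k=s_k/\|v\|_k$ is what lets the hypothesis bite uniformly in the norm ratio $\|v\|_i/\|v\|_j$, which in general differs from $1$ when $\kappa_i\neq\kappa_j$; the remaining items are bookkeeping around Lemmas \ref{lem:maximality}--\ref{lem:stability}.
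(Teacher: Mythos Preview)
Your proposal is correct and follows essentially the same route as the paper's proof: (a), (c), (d), (e) match almost verbatim, and your argument for (b) is the paper's argument with the cosmetic difference that you arrange the remaining components $u_k$ ($k\neq i,j$) to have supports disjoint from $v$, whereas the paper simply drops those mixed terms using $\lambda_{ik}<0$ to obtain the same pair of inequalities.
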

 
\begin{proof}
$(a):$ Let  $u=(u_1,\ldots,u_M)\in\mathcal{T}$ be such that $u_i$ and $u_j$ have disjoint supports if $i\neq j$. Then, $d_{u,ij}=0$ for every $i\neq j$, and, setting $s_i:=(\mu_i\int_\Omega |u_i|^p)^{-1/(p-2)}$, we have that $(s_1u_1,\ldots,s_Mu_M)\in\cN$. This proves that $u\in\mathcal{U}$. Moreover, as $a_{u,i},b_{u,i},d_{u,ij}$ are continuous functions of $u$, Lemma \ref{lem:stability} implies that $\mathcal{U}$ is open.

$(b):$ We assume without loss of generality that $i=1$ and $j=2$. Let $v,v_3,\ldots,v_M\in H$ be nontrivial functions. Assume there exist $t_1,t_2>0$ such that $(t_1v,t_2v,v_3,\ldots,v_M)\in\mathcal{N}$. Then, as $\alpha_{ij}+\beta_{ij}=p$ and $\lambda_{ij}<0$ for all $i,j$, we have that
\begin{align*}
0 < t_1^2\|v\|^2 &\le\mu_1t_1^p\int_\Omega |v|^p+\lambda_{12}\beta_{12}t_2^{\alpha_{12}} t_1^{\beta_{12}}\int_\Omega |v|^p \\
&= t_1^{\beta_{12}}\int_\Omega |v|^p\,\left(\mu_1 t_1^{\alpha_{12}}+\lambda_{12}\beta_{12} t_2^{\alpha_{12}}\right), \\
0 < t_2^2\|v\|^2 &\le\mu_2t_2^p\int_\Omega |v|^p+\lambda_{21}\beta_{21}t_1^{\alpha_{21}} t_2^{\beta_{21}}\int_\Omega |v|^p \\
&= t_2^{\beta_{21}}\int_\Omega |v|^p\,\left(\mu_2 t_2^{\alpha_{21}}+\lambda_{21}\beta_{21} t_1^{\alpha_{21}}\right).
\end{align*}
Since $\lambda_{12}=\lambda_{21}$ and the right-hand sides above must be positive, we get that
\begin{equation*} 
\frac{t_1^{\alpha_{12}}}{t_2^{\alpha_{12}}}>-\lambda_{12}\frac{\beta_{12}}{\mu_1}\qquad\text{and}\qquad\frac{t_2^{\alpha_{21}}}{t_1^{\alpha_{21}}}>-\lambda_{12}\frac{\beta_{21}}{\mu_2},
\end{equation*}
which is impossible if $-\lambda_{12}\geq\max\{\frac{\mu_1}{\beta_{12}},\,\frac{\mu_2}{\beta_{21}}\}$.  So, if this last inequality holds true, then
\begin{equation} \label{eq:aa}
\left(\frac{v}{\|v\|_1},\frac{v}{\|v\|_2},\frac{v_3}{\|v_3\|_3},\ldots,\frac{v_M}{\|v_M\|_M}\right)\in\mathcal{T}\smallsetminus\mathcal{U}.
\end{equation}

$(c):$ If $(u_n)$ is a sequence in $\tilde{\mathcal{U}}$ and $u_n\to u\in\tilde{\mathcal{U}}$, then, for each $i,j=1,\ldots,M$ with $i\neq j$, we have that $a_{u_n,i}\to a_{u,i}$, \,$b_{u_n,i}\to b_{u}$ and $d_{u_{n},ij}\to d_{u,ij}$. So, from Lemma \ref{lem:stability} we get that $s_{u_{n},i}\to s_{u,i}$. Hence, $\tilde{\mathfrak{m}}:\tilde{\mathcal{U}}\to\mathcal{N}$ is continuous. 

The inverse of $\mathfrak{m}:\mathcal{U}\to\mathcal{N}$ is given by
\[
\mathfrak{m}^{-1}(u_1,\ldots,u_M) = \left(\frac {u_1}{\|u_1\|_1},\ldots,\frac {u_M}{\|u_M\|_M}\right),
\]
which is, obviously, continuous. 

$(d):$ If $(u_1,\ldots,u_M)\in \mathcal{N}$ then, as $\lambda_{ij}<0$ for every $i\neq j$, we have that $\|u_i\|_i^2\leq \mu_i\int_\Omega |u_i|^p$ for all $i=1,\ldots,M$. The statement now follows from Sobolev's inequality.

$(e):$ Let $(u_n)$ be a sequence in $\cU$ such that $u_n\to u\in\partial\cU$. If the sequence $(s_{u_{n},i})$ were bounded for every $i=1,\ldots,M$, then, after passing to a subsequence, $s_{u_{n},i}\to s_i$. Since $\cN$ is closed, we would have that $(s_1u_1,\ldots,s_Mu_M)\in\cN$ and, therefore, $u\in\cU$. This is impossible because $u\in\partial\cU$ and $\cU$ is open in $\mathcal{T}$. 
\end{proof}

A fully nontrivial solution $u$ to \eqref{eq:system} will be called \emph{synchronized} if $u_i=t_iv$ and $u_j=t_jv$ for some $i\ne j$ and $t_i,t_j\in\r$. 

\begin{proposition} \label{unsynchr}
There exists $\Lambda_0<0$ such that if $\lambda_{ij} < \Lambda_0$ for all $i,j$, then the system \eqref{eq:system} has no fully nontrivial synchronized solutions.
\end{proposition}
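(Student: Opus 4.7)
The plan is to replay, for actual solutions, the inequality chain already used in the proof of Proposition~\ref{prop:m}(b). Assume for contradiction that $u=(u_1,\dots,u_M)$ is a fully nontrivial synchronized solution, so there exist $i\ne j$, $v\in H\setminus\{0\}$, and $t_i,t_j\in\r\setminus\{0\}$ with $u_i=t_iv$ and $u_j=t_jv$. As $u$ is a fully nontrivial solution, $u\in\cN$, so $\partial_k\cJ(u)u_k=0$ for every $k$; I will only need this identity for $k=i$ and $k=j$.

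From $\partial_i\cJ(u)u_i=0$, discarding the terms involving $u_k$ with $k\notin\{i,j\}$ (all of which are $\le 0$, since $\lambda_{ik}<0$), I obtain
$$
t_i^2\|v\|_i^2 \;\le\; \mu_i|t_i|^p\!\int_\Omega|v|^p + \lambda_{ij}\beta_{ij}|t_j|^{\alpha_{ij}}|t_i|^{\beta_{ij}}\!\int_\Omega|v|^p.
$$
Using $\alpha_{ij}+\beta_{ij}=p$ and the fact that the left-hand side is strictly positive, this forces
$$
\bigl(|t_j|/|t_i|\bigr)^{\alpha_{ij}} \;<\; \frac{\mu_i}{(-\lambda_{ij})\beta_{ij}},
$$
and the symmetric computation starting from $\partial_j\cJ(u)u_j=0$, together with $\lambda_{ji}=\lambda_{ij}$ and $\alpha_{ji}=\beta_{ij}$, yields
$$
\bigl(|t_i|/|t_j|\bigr)^{\beta_{ij}} \;<\; \frac{\mu_j}{(-\lambda_{ij})\beta_{ji}}.
$$

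Setting
$$
\Lambda_0 \;:=\; -\max_{i\ne j}\max\!\Bigl\{\tfrac{\mu_i}{\beta_{ij}},\,\tfrac{\mu_j}{\beta_{ji}}\Bigr\}\;<\;0,
$$
the hypothesis $\lambda_{ij}<\Lambda_0$ for all $i,j$ makes both right-hand sides above strictly less than $1$, giving $|t_j|<|t_i|$ and $|t_i|<|t_j|$ at the same time, a contradiction. There is no real obstacle in this argument; the two subtleties worth mentioning are that the non-synchronized components $u_k$ ($k\ne i,j$) need not be proportional to $v$, but their contributions to the $i$-th and $j$-th Nehari identities carry the sign of $\lambda_{ik}<0$ and can simply be dropped, and the signs of $t_i,t_j$ play no role since only $|t_i|$ and $|t_j|$ appear in the estimates.
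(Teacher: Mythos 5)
Your proof is correct and takes essentially the same route as the paper: the paper's proof simply invokes the inequality chain it established in Proposition~\ref{prop:m}$(b)$ (culminating in \eqref{eq:aa}) and observes that a synchronized solution would lie in $\mathcal{N}$, whereas you replay that chain directly, obtaining $(|t_j|/|t_i|)^{\alpha_{ij}}<1$ and $(|t_i|/|t_j|)^{\beta_{ij}}<1$ and the contradiction $|t_j|<|t_i|<|t_j|$, with the same choice of $\Lambda_0$.
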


\begin{proof}
Choose $\Lambda_0$ such that $-\Lambda_0\geq\max\{\frac{\mu_i}{\beta_{ij}},\,\frac{\mu_j}{\beta_{ji}}\}$ for all $i\neq j$. Then \eqref{eq:aa} holds true and so $u$ cannot be a solution to \eqref{eq:system}. 
\end{proof}

$\mathcal{T}$ is a smooth Hilbert submanifold of $\mathcal{H}$. The tangent space to $\mathcal{T}$ at a point $u=(u_1,\ldots,u_M)\in\mathcal{T}$ is the space
$$\mathrm{T}_u(\mathcal{T}):=\{(v_1,\ldots,v_M)\in\mathcal{H}:\langle u_i,v_i\rangle_i=0\text{ for all }i=1,\ldots,M\}.$$
Let $\tilde{\Psi}:\tilde{\mathcal{U}}\to\mathbb{R}$ be given by $\tilde{\Psi}(u) := \cJ(\tilde{\mathfrak{m}}(u))$, and let $\Psi$ be the restriction of $\tilde{\Psi}$ to $\mathcal{U}$. Then,
\begin{equation} \label{eq:psi}
\Psi(u) =\frac{p-2}{2p}\sum_{i=1}^M\|s_{u,i}u_i\|_i^2=\frac{p-2}{2p}\sum_{i=1}^M s_{u,i}^2\quad\text{for every }u\in\mathcal{U}.
\end{equation}
If $u\in\mathcal{U}$ and the derivative $\Psi'(u)$ of $\Psi$ at $u$ exists, then
$$\|\Psi'(u)\|_*:=\sup\limits_{\substack{v\in\mathrm{T}_u(\mathcal{T}) \\v\neq 0}}\frac{|\Psi'(u)v|}{\|v\|},$$
i.e., $\|\cdot\|_*$ is the norm in the cotangent space $\mathrm{T}_u^*(\mathcal{T})$ to $\mathcal{T}$ at $u$.
A sequence $(u_n)$ in $\mathcal{U}$ is called a $(PS)_c$\emph{-sequence for} $\Psi$ if $\Psi(u_n)\to c$ and $\|\Psi'(u_n)\|_*\to 0$, and $\Psi$ is said to satisfy the $(PS)_c$\emph{-condition} if every such sequence has a convergent subsequence.

As usual, a $(PS)_c$\emph{-sequence for} $\mathcal{J}$ is a sequence $(u_n)$ in $\mathcal{H}$ such that $\mathcal{J}(u_n)\to c$ and $\|\mathcal{J}'(u_n)\|_{\mathcal{H}^{-1}}\to 0$, and $\mathcal{J}$ satisfies the $(PS)_c$\emph{-condition} if any such sequence has a convergent subsequence.
 
\begin{theorem} \label{thm:A}
\begin{itemize}
\item[$(i)$] $\Psi\in \mathcal{C}^1(\cU,\r)$ and
\begin{equation*}
\Psi'(u)v = \cJ'(\mathfrak{m}(u))[s_uv] \quad \text{for all } u\in\mathcal{U} \text{ and } v\in \mathrm{T}_u(\mathcal{T}).
\end{equation*}
\item[$(ii)$] If $(u_n)$ is a $(PS)_c$-sequence for $\Psi$, then $(\mathfrak{m}(u_n))$ is a $(PS)_c$-sequence for $\cJ$. Conversely, if $(u_n)$ is a $(PS)_c$-sequence for $\mathcal{J}$ and $u_n\in\mathcal{N}$ for all $n\in\mathbb{N}$, then $(\mathfrak{m}^{-1}(u_n))$ is a $(PS)_c$-sequence for $\Psi$.
\item[$(iii)$] $u$ is a critical point of $\Psi$ if and only if $\mathfrak{m}(u)$ is a fully nontrivial critical point of $\cJ$.
\item[$(iv)$] If $(u_n)$ is a sequence in $\cU$ such that $u_n\to u\in\partial\cU$, then $\Psi(u_n) \to\infty$.
\item[$(v)$] $\Psi$ is even, i.e., $\Psi(u)=\Psi(-u)$ for every $u\in\mathcal{U}$. 
\end{itemize}
\end{theorem}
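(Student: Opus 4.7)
My plan is to reduce all five parts to $\mathcal{C}^1$-properties of the map $u \mapsto s_u \in (0,\infty)^M$, where $s_u$ is the unique critical point of $J_u$ provided by Lemma \ref{lem:uniqueness}. Continuity is already given by Lemma \ref{lem:stability}; the essential new input for part~(i) is upgrading this to $\mathcal{C}^1$-dependence. I would apply the implicit function theorem to $F(u,s) := \nabla J_u(s)$, which is $\mathcal{C}^1$ because the coefficients $a_{u,i}, b_{u,i}, d_{u,ij}$ depend smoothly on $u$ via the Sobolev embeddings. The crux is invertibility of the Hessian $D_s^2 J_u(s_u)$; after normalizing to $s_u = (1,\ldots,1)$ as in Lemma \ref{lem:uniqueness}, this Hessian can be computed explicitly using \eqref{eq:coef}, and non-degeneracy has to be extracted from uniqueness and strict maximality together with the polynomial form of $J_u$.

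Once $u \mapsto s_u$ is known to be $\mathcal{C}^1$, part~(i) follows from the chain rule applied to $\tilde\Psi(u) = \mathcal{J}(s_u u)$:
$$\tilde\Psi'(u)w = \mathcal{J}'(s_u u)[s_u w] + \sum_{i=1}^M \bigl(s_u'(u)[w]\bigr)_i\, \partial_i \mathcal{J}(s_u u)[u_i],$$
and the last sum vanishes because $\partial_i \mathcal{J}(s_u u)[u_i] = \partial_i J_u(s_u) = 0$. Restricting to $u \in \mathcal{U}$ and $v \in \mathrm{T}_u(\mathcal{T})$ gives the claimed formula. Part~(ii) follows from this formula combined with uniform bounds $r \le s_{u_n,i} \le R$ along a $(PS)_c$-sequence (via Proposition \ref{prop:m}(d)--(e) together with $\Psi(u_n) \to c$); these bounds yield $\|\Psi'(u_n)\|_* \asymp \|\mathcal{J}'(\mathfrak{m}(u_n))\|_{\mathcal{H}^{-1}}$ up to multiplicative constants, so $(PS)_c$-sequences of $\Psi$ map to $(PS)_c$-sequences of $\mathcal{J}$ and vice versa via $\mathfrak{m}$ and its continuous inverse from Proposition \ref{prop:m}(c).

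For part~(iii), $u$ is a critical point of $\Psi$ iff $\mathcal{J}'(\mathfrak{m}(u))[s_u v] = 0$ for every $v \in \mathrm{T}_u(\mathcal{T})$. Since each $s_{u,i} > 0$, as $v$ ranges over $\mathrm{T}_u(\mathcal{T})$ so does $s_u v$; combining this with $\partial_i \mathcal{J}(\mathfrak{m}(u))[u_i] = 0$ (true since $\mathfrak{m}(u) \in \mathcal{N}$) and the orthogonal decomposition of $\mathcal{H}$ into $\mathrm{T}_u(\mathcal{T})$ and the $M$-dimensional space spanned by the ``radial'' directions $(0,\ldots,u_i,\ldots,0)$ yields $\mathcal{J}'(\mathfrak{m}(u)) = 0$, and conversely. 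Part~(iv) is immediate from Proposition \ref{prop:m}(e) and \eqref{eq:J}, since $\Psi(u_n) = \tfrac{p-2}{2p}\|\mathfrak{m}(u_n)\|^2 \to \infty$. Part~(v) holds because each term in $\mathcal{J}$ is even in every $u_i$ separately, so $J_{-u} = J_u$, whence $s_{-u} = s_u$, $\mathfrak{m}(-u) = -\mathfrak{m}(u)$, and $\Psi(-u) = \mathcal{J}(-\mathfrak{m}(u)) = \mathcal{J}(\mathfrak{m}(u)) = \Psi(u)$.

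The main obstacle I anticipate is precisely the non-degeneracy of $D_s^2 J_u(s_u)$ needed for the implicit function theorem in part~(i): strict maximality at a unique critical point does not formally rule out a degenerate Hessian (consider $-\|s - s_u\|^4$), so one must exploit the explicit quadratic-plus-polynomial structure of $J_u$ together with \eqref{eq:coef}. Once this is settled, parts~(ii)--(v) are essentially bookkeeping.
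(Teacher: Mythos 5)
Your proof of parts $(ii)$--$(v)$ matches the paper's in all essential respects, but your approach to part $(i)$ takes a genuinely different route and, as you write it, has a concrete gap. You propose to establish $\mathcal{C}^1$-dependence of $s_u$ on $u$ via the implicit function theorem applied to $\nabla J_u(s)=0$, which requires the Hessian $D_s^2 J_u(s_u)$ to be invertible. You flag this as ``the main obstacle'' and leave it unresolved, and you are right that strict maximality plus uniqueness do not formally imply non-degeneracy. The paper sidesteps this entirely, adapting Proposition 9 of Szulkin--Weth \cite{sw}: because $s_u$ is a \emph{maximizer} of $J_u$, one sandwiches
\begin{align*}
\tilde\Psi(u+tv)-\tilde\Psi(u) &\le \cJ(s_{u+tv}(u+tv))-\cJ(s_{u+tv}u),\\
\tilde\Psi(u+tv)-\tilde\Psi(u) &\ge \cJ(s_{u}(u+tv))-\cJ(s_{u}u),
\end{align*}
applies the mean value theorem to each side, and divides by $t$; letting $t\to 0$ and using only the \emph{continuity} of $u\mapsto s_u$ (Lemma \ref{lem:stability}) yields the formula $\tilde\Psi'(u)v=\cJ'(\tilde{\mathfrak m}(u))[s_uv]$. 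The correction term you compute via the chain rule never appears because one never differentiates $s_u$. This is more economical: it requires neither $\mathcal{C}^1$-dependence of $s_u$ nor any Hessian information.

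That said, the gap in your route \emph{can} be closed, and it is worth knowing how: normalize $s_u=(1,\ldots,1)$ so that \eqref{eq:coef} holds, and compute the Hessian $H$ of $J_u$ there: $H_{ii}=-p(p-2)b_i+2\sum_{j\neq i}d_{ij}\beta_{ij}(\beta_{ij}-2)$ and $H_{ik}=2d_{ik}\beta_{ik}\alpha_{ik}\ge 0$ for $k\neq i$. Using $\alpha_{ij}+\beta_{ij}=p$ one gets $H_{ii}+\sum_{k\neq i}H_{ik}=-(p-2)\bigl(pb_i-2\sum_{j\neq i}d_{ij}\beta_{ij}\bigr)=-2(p-2)a_i<0$, and since $\beta_{ij}<p-1$ one also checks $H_{ii}<0$. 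Thus $H$ is strictly diagonally dominant with negative diagonal, hence negative definite, and the implicit function theorem applies. So your plan is salvageable, and would in fact give the stronger conclusion that $u\mapsto s_u$ is $\mathcal{C}^1$; but as written, the non-degeneracy is asserted rather than proved, and the paper's sandwich argument reaches the same conclusion for $\Psi$ with less machinery.
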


\begin{proof}
We adapt the arguments of Proposition 9 and Corollary 10 in \cite{sw}.
 
$(i):$ Let $u\in\tilde{\mathcal{U}}$ and $v\in\mathcal{H}$. As $s_u$ is the maximum of $J_u$, using the mean value theorem we obtain
\begin{align*}
&\tilde\Psi(u+tv)-\tilde\Psi(u) = \cJ(s_{u+tv}(u+tv)) - \cJ(s_uu) \\
&\qquad \le  \cJ(s_{u+tv}(u+tv)) - \cJ(s_{u+tv}u) = \cJ'(s_{u+tv}(u+\tau_1 tv))\,[ts_{u+tv}v],
\end{align*}
for $|t|$ small enough and some $\tau_1\in(0,1)$. Similarly,
\[
\tilde\Psi(u+tv)-\tilde\Psi(u) \ge \cJ(s_{u}(u+tv)) - \cJ(s_{u}u) = \cJ'(s_{u}(u+\tau_2 tv))\,[ts_{u}v],
\]
for some $\tau_2\in(0,1)$. From the continuity of $s_u$ and these two inequalities we obtain
\[
\lim_{t\to 0} \frac{\tilde\Psi(u+tv)-\tilde\Psi(u)}t = \cJ'(s_uu)[s_uv] = \cJ'(\mathfrak{\tilde m}(u))[s_uv].
\]
The right-hand side is linear in $v$ and continuous in $v$ and $u$. Therefore $\tilde\Psi$ is of class $\mathcal{C}^1$. If $u\in \cU$ and $v\in \mathrm{T}_u(\mathcal{T})$, then $\mathfrak{\tilde m}(u)=\mathfrak{m}(u)$, and the statement is proved. 

$(ii):$ Note that $\mathcal{H} = \mathrm{T}_u(\mathcal{T}) \oplus(\r u_1,\ldots,\r u_M)$ for each $u\in\cU$. Since $\mathfrak{m}(u)\in\mathcal{N}$, we have that $\cJ'(\mathfrak{m}(u))w=0$ if $w\in (\r u_1,\ldots,\r u_M)$. So, from $(i)$ we get
\begin{align*}
C_0(\min_i\{s_{u,i}\})\|\mathcal{J}'(\mathfrak{m}(u))\|_{\mathcal{H}^{-1}} &\leq\|\Psi'(u)\|_* = \sup_{\substack{v\in \mathrm{T}_u(\mathcal{T}) \\ v\neq 0}}\frac{|\cJ'(\mathfrak{m}(u))[s_uv]|}{\|v\|} \\ 
&\leq (\max_i\{s_{u,i}\})\|\mathcal{J}'(\mathfrak{m}(u))\|_{\mathcal{H}^{-1}}.
\end{align*}
If $(\Psi(u_n))$ converges, then $(s_{u_n})$ is bounded in $\mathbb{R}^M$ by \eqref{eq:psi}. Moreover, by Proposition \ref{prop:m}$(d)$, this sequence is bounded away from $0$. Therefore, $(\mathfrak{m}(u_n))$ is a $(PS)_c$-sequence for $\mathcal{J}$ iff $(u_n)$ is a $(PS)_c$-sequence for $\Psi$, as claimed.

$(iii):$ As $\cJ'(\mathfrak{m}(u))w=0$ if $w\in (\r u_1,\ldots,\r u_M)$, it follows from $(i)$ that $\Psi'(u)=0$ if and only if $\cJ'(\mathfrak{m}(u))=0$. 

$(iv):$ This statement follows from Proposition \ref{prop:m}$(e)$ and \eqref{eq:J}.

$(v):$ Since $-u\in\mathcal{N}$ iff $u\in\mathcal{N}$, we have that $s_u=s_{-u}$. So, as $\mathcal{J}$ is even, $\Psi(-u)=\mathcal{J}(s_{-u}(-u))=\mathcal{J}(s_{u}u)=\Psi(u)$.
\end{proof}

Let $Z$ be a subset of $\mathcal{T}$ such that $-u\in Z$ iff $u\in Z$. If $Z\neq\emptyset$, the \emph{genus of} $Z$ is the smallest integer $k \geq 1$ such that there exists an odd continuous function $Z \to \mathbb{S}^{k-1}$ into the unit sphere $\mathbb{S}^{k-1}$ in $\mathbb{R}^k$. We denote it by $\mathrm{genus}(Z)$. If no such $k$ exists, we define $\mathrm{genus}(Z) := \infty$. We set $\mathrm{genus}(\emptyset):=0$.

As usual, we write 
$$\Psi^{\leq a}:=\{u\in\mathcal{U}:\Psi(u)\leq a\},\qquad K_c:=\{u\in\mathcal{U}:\Psi(u)=c,\;\|\Psi'(u)\|_*=0\}.$$
The previous theorem yields the following one.

\begin{theorem} \label{thm:B}
\begin{itemize}
\item[$(a)$]If $\inf_\mathcal{N}\mathcal{J}$ is attained by $\mathcal{J}$ at some $u=(u_1,\ldots,u_M)\in\mathcal{N}$, then $u$ and $|u|:=(|u_1|,\ldots,|u_M|)$ are fully nontrivial solutions of \eqref{eq:system}.
\item[$(b)$]If $\Psi:\mathcal{U}\to\mathbb{R}$ satisfies the $(PS)_c$-condition for every $c\leq a$, then  the system \eqref{eq:system} has, either an infinite (in fact, uncountable) set of fully nontrivial solutions with the same norm, or it has at least $\mathrm{genus}(\Psi^{\leq a})$ fully nontrivial solutions with pairwise different norms.
\item[$(c)$]If $\Psi:\mathcal{U}\to\mathbb{R}$ satisfies the $(PS)_c$-condition for every $c\in\mathbb{R}$ and $\mathrm{genus}(\mathcal{U})=\infty$, then the system \eqref{eq:system} has an unbounded sequence of fully nontrivial solutions.
\end{itemize}
\end{theorem}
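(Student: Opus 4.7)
For part $(a)$, I would transfer the minimization to $\cU$ via the homeomorphism $\mathfrak{m}^{-1}:\cN\to\cU$ of Proposition \ref{prop:m}$(c)$: since $\Psi=\cJ\circ\mathfrak{m}$ on $\cU$, one has $\inf_\cN\cJ=\inf_\cU\Psi$, attained at $\mathfrak{m}^{-1}(u)$. Theorem \ref{thm:A}$(iv)$ prevents this minimizer from approaching $\partial\cU$, so it is a critical point of $\Psi$, and Theorem \ref{thm:A}$(iii)$ then gives that $u=\mathfrak{m}(\mathfrak{m}^{-1}(u))$ is a fully nontrivial critical point of $\cJ$, hence solves \eqref{eq:system}. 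For $|u|$ I would observe that $\cN$ and $\cJ$ depend on each $u_i$ only through $|u_i|$: $\|\,|u_i|\,\|_i=\|u_i\|_i$ because $|\nabla|u_i||=|\nabla u_i|$ a.e., and the integrands $\mu_i|u_i|^p$ and $\lambda_{ij}|u_j|^{\alpha_{ij}}|u_i|^{\beta_{ij}}$ are invariant under $u_i\mapsto|u_i|$, so $|u|\in\cN$ with $\cJ(|u|)=\cJ(u)$, and the preceding argument applies to $|u|$.

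For part $(b)$, I would run the standard Krasnoselskii-Ljusternik-Schnirelman minimax on the open submanifold $\cU\subset\mathcal{T}$. Theorem \ref{thm:A} provides all the usual hypotheses: $\Psi$ is $\mathcal{C}^1$ and even, bounded below by $\tfrac{p-2}{2p}Md_0^2>0$ via \eqref{eq:psi} and Proposition \ref{prop:m}$(d)$, satisfies $(PS)_c$ for $c\le a$, and blows up on $\partial\cU$ by Theorem \ref{thm:A}$(iv)$, which keeps any descending pseudo-gradient flow inside $\cU$. Setting
$$
c_k:=\inf\{b\in\r:\mathrm{genus}(\Psi^{\le b})\ge k\},\qquad 1\le k\le \mathrm{genus}(\Psi^{\le a}),
$$
the standard deformation argument shows that each $c_k$ is a critical value of $\Psi$ and that $c_k=c_{k+m-1}$ forces $\mathrm{genus}(K_{c_k})\ge m$. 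If all $c_k$ are pairwise distinct, then by \eqref{eq:J} and Theorem \ref{thm:A}$(iii)$ the images $\mathfrak{m}(u_k)\in\cN$ are fully nontrivial solutions with pairwise different $\mathcal{H}$-norms, yielding the required $\mathrm{genus}(\Psi^{\le a})$ solutions. Otherwise some $c_k=c_{k+1}$, so $\mathrm{genus}(K_{c_k})\ge 2$; since a compact symmetric set of genus $\ge 2$ must be uncountable (a countable compact symmetric set avoiding $0$ is totally disconnected, hence admits an odd continuous map to $\mathbb{S}^0$ and would have genus $1$), Theorem \ref{thm:A}$(iii)$ produces uncountably many fully nontrivial solutions of common $\mathcal{H}$-norm $\sqrt{2pc_k/(p-2)}$.

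For part $(c)$, the hypothesis $\mathrm{genus}(\cU)=\infty$ together with the continuity of $\Psi$ (so that $\Psi$ is bounded on every compact symmetric subset of $\cU$) yields $\mathrm{genus}(\Psi^{\le a})\to\infty$ as $a\to\infty$. Applying $(b)$ with larger and larger $a$ produces a nondecreasing sequence $(c_k)$ of critical values of $\Psi$. I would then show $c_k\to\infty$: otherwise $c_k\nearrow c<\infty$, so for every $m$ one eventually has $c_k=c_{k+m-1}=c$, forcing $\mathrm{genus}(K_c)\ge m$ for every $m$; this contradicts the compactness of $K_c$ from $(PS)_c$ (a compact symmetric subset of $\cH$ bounded away from $0$ has finite genus, by covering it with finitely many small convex pieces that are disjoint from their antipodes). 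By \eqref{eq:J}, $c_k\to\infty$ is equivalent to $\|\mathfrak{m}(u_k)\|\to\infty$, producing the unbounded sequence of fully nontrivial solutions. The main technical subtlety lies in the deformation argument on the \emph{open} set $\cU$: the pseudo-gradient flow must be tailored so trajectories do not escape $\cU$, which is handled cleanly by the boundary blow-up in Theorem \ref{thm:A}$(iv)$; the uncountability step in $(b)$ is the other point that deserves care.
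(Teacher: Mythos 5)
Your approach matches the paper's: transfer the problem to $\cU$ via $\mathfrak{m}$, note that the boundary blow-up of Theorem \ref{thm:A}$(iv)$ makes $\cU$ positively invariant under the negative pseudogradient flow so that the standard deformation lemma applies, and then run Lusternik--Schnirelmann via genus with the usual $c_j$. Parts $(a)$ and $(b)$ are correct; for $(b)$ the paper simply cites \cite[Lemma II.5.6]{s} for the size of $K_c$, while you supply a self-contained topological reason (a countable compact symmetric set in a metric space is totally disconnected, hence admits an odd map to $\mathbb{S}^0$ and has genus $1$), which is a legitimate and slightly more informative route.

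There is, however, a genuine slip in your proof of $(c)$. You write: ``otherwise $c_k\nearrow c<\infty$, so for every $m$ one eventually has $c_k=c_{k+m-1}=c$.'' This inference is false: a nondecreasing sequence can converge to $c$ without ever attaining it, and in that case no two $c_k$ need coincide, so you never get $\mathrm{genus}(K_c)\geq m$ from the multiplicity lemma. The correct argument (this is exactly \cite[Proposition 9.33]{ra}, which the paper cites and does not reprove) does use the compactness of $K_c$ from $(PS)_c$, but differently: $K_c$ has finite genus $q$, so there is a closed symmetric neighborhood $N$ of $K_c$ in $\cU$ with $\mathrm{genus}(N)=q$; the deformation lemma gives $\eps>0$ and an odd deformation carrying $\Psi^{\leq c+\eps}\smallsetminus N$ into $\Psi^{\leq c-\eps}$; picking $k$ large with $c_k>c-\eps$ and using $c_{k+q}\leq c$, subadditivity and monotonicity of genus give $\mathrm{genus}(\Psi^{\leq c-\eps})\geq\mathrm{genus}(\Psi^{\leq c+\eps})-q\geq(k+q)-q=k$, hence $c_k\leq c-\eps$, a contradiction. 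Your conclusion is right and everything else in $(c)$ is fine, but the ``eventual equality'' step as written does not hold and needs to be replaced by this excision/deformation argument.
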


\begin{proof}
Theorem \ref{thm:A}$(iii)$ states that $u$ is a critical point of $\Psi$ iff $\mathfrak{m}(u)$ is a fully nontrivial critical point of $\mathcal{J}$. Note that $\Psi(u)=\frac{p-2}{2p}\|\mathfrak{m}(u)\|^2$, by \eqref{eq:J}.

If $\inf_\mathcal{N}\mathcal{J}=\mathcal{J}(u)$ and $u\in\mathcal{N}$, then $\mathfrak{m}^{-1}(u)\in\mathcal{U}$ and $\Psi(\mathfrak{m}^{-1}(u))=\inf_\mathcal{U}\Psi$. So $u$ is a fully nontrivial critical point of $\mathcal{J}$. As $|u|\in\mathcal{N}$ and $\mathcal{J}(|u|)=\mathcal{J}(u)$ the same is true for $|u|$. This proves $(a)$.

Theorem \ref{thm:A}$(iv)$ implies that $\mathcal{U}$ is positively invariant under the negative pseudogradient flow of $\Psi$, so the usual deformation lemma holds true for $\Psi$; see, e.g., \cite[Section II.3]{s} or \cite[Section 5.3]{w}. Set
$$c_j:=\inf\{c\in\mathbb{R}:\mathrm{genus}(\Psi^{\leq c})\geq j\}.$$
Standard arguments show that, under the assumptions of $(b)$, $c_j$ is a critical value of $\Psi$ for every $j=1,\ldots,\mathrm{genus}(\Psi^{\leq a})$. Moreover, if some of these values coincide, say $c:=c_j=\cdots=c_{j+k}$, then $\mathrm{genus}(K_c)\geq k+1\geq 2$. Hence, $K_c$ is an infinite set; see, e.g., \cite[Lemma II.5.6]{s}. On the other hand, under the assumptions of $(c)$, $c_j$ is a critical value for every $j\in\mathbb{N}$, and a well known argument (see, e.g., \cite[Proposition 9.33]{ra}) shows that $c_j\to\infty$ as $j\to\infty$. This completes the proof.
\end{proof}

\section{Some applications} \label{sec:applications}

\subsection{Subcritical systems in exterior domains}

Consider the subcritical system \eqref{eq:exterior} in an exterior domain $\Omega$. First, we show that this system cannot be solved by minimization. Set 
$$S_{p,i}:=\inf\limits_{\substack{w\in H^1(\mathbb{R}^N) \\ w\neq 0}}\frac{\|w\|_i^2}{|w|_{p,i}^2},$$
where \, $\|w\|_i^2:=\int_{\mathbb{R}^N}(|\nabla w|^2+\kappa_iw^2)$ \, and \, $|w|_{p,i}^p:=\int_{\mathbb{R}^N}\mu_i|w|^p.$

\begin{proposition}
We have that
\begin{equation} \label{eq:infimum}
\inf_{u\in\mathcal{N}}\mathcal{J}(u)=\frac{p-2}{2p}\sum_{i=1}^M S_{p,i}^\frac{p}{p-2}
\end{equation}
and this infimum is not attained by $\mathcal{J}$ on $\mathcal{N}$.
\end{proposition}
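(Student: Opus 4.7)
\medskip
\noindent\textbf{Proof proposal.}
The plan is to establish matching lower and upper bounds in \eqref{eq:infimum} and then to preclude attainment by a rigidity argument.

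\emph{Lower bound.} For any $u=(u_1,\ldots,u_M)\in\cN$, the relation $\partial_i\cJ(u)u_i=0$ reads
$$\|u_i\|_i^2=\mu_i\io|u_i|^p+\sum_{j\ne i}\lambda_{ij}\beta_{ij}\io|u_j|^{\alpha_{ij}}|u_i|^{\beta_{ij}}.$$
Since $\lambda_{ij}<0$, this yields $\|u_i\|_i^2\le|u_i|_{p,i}^p$. Extending $u_i$ by zero to $\rn$ preserves both norms, so the definition of $S_{p,i}$ gives $\|u_i\|_i^2\ge S_{p,i}|u_i|_{p,i}^2$. Combining the two inequalities, $|u_i|_{p,i}^{p-2}\ge S_{p,i}$, whence $\|u_i\|_i^2\ge S_{p,i}^{p/(p-2)}$. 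Summing over $i$ and applying \eqref{eq:J} yields $\cJ(u)\ge\frac{p-2}{2p}\sum_i S_{p,i}^{p/(p-2)}$.

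\emph{Upper bound.} Fix $\eps>0$ and, for each $i$, pick $v_i\in C_c^\infty(\rn)$ with $\|v_i\|_i^2/|v_i|_{p,i}^2<S_{p,i}+\eps$ (possible since $C_c^\infty(\rn)$ is dense in $H^1(\rn)$). Choose pairwise distinct unit vectors $y^1,\ldots,y^M\in\rn$ and set $v_i^n(x):=v_i(x-ny^i)$. Since $\mathrm{supp}(v_i)$ is compact, the translates escape to infinity along pairwise different directions, and $\rn\smallsetminus\Omega$ is bounded, for $n$ large the sets $\mathrm{supp}(v_i^n)$ are pairwise disjoint and all contained in $\Omega$. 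Hence $v^n:=(v_1^n,\ldots,v_M^n)\in(H^1_0(\Omega)\smallsetminus\{0\})^M$ has components with pairwise disjoint supports; all coupling integrals vanish and (as in Proposition~\ref{prop:m}$(a)$) the scaling $s^n_i:=(\|v_i^n\|_i^2/|v_i^n|_{p,i}^p)^{1/(p-2)}$ puts $s^nv^n\in\cN$. A direct componentwise computation, together with translation invariance of both norms, gives
$$\cJ(s^nv^n)=\frac{p-2}{2p}\sum_{i=1}^M\left(\frac{\|v_i\|_i^2}{|v_i|_{p,i}^2}\right)^{p/(p-2)}<\frac{p-2}{2p}\sum_{i=1}^M(S_{p,i}+\eps)^{p/(p-2)}.$$
Letting $\eps\to 0$ closes the upper bound.

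\emph{Non-attainment.} Suppose some $u\in\cN$ attained the infimum. Then every inequality in the lower-bound argument would collapse to an equality: first, $\io|u_j|^{\alpha_{ij}}|u_i|^{\beta_{ij}}=0$ for all $i\ne j$, so the $|u_i|$ have pairwise disjoint supports; second, each $|u_i|$, extended by zero to $\rn$, minimizes $\|\cdot\|_i^2/|\cdot|_{p,i}^2$ on $H^1(\rn)$. The further equality $|u_i|_{p,i}^{p-2}=S_{p,i}$ fixes the Lagrange multiplier, so $|u_i|$ is a nonnegative, nontrivial weak solution of $-\Delta w+\kappa_iw=\mu_iw^{p-1}$ on all of $\rn$; by the strong maximum principle, $|u_i|>0$ everywhere on $\rn$. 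This contradicts either the pairwise disjointness of supports (when $\Omega=\rn$) or the vanishing of $u_i$ on the nonempty open set $\rn\smallsetminus\Omega$ (when $\Omega\ne\rn$).

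The one delicate step is the upper bound: the exterior hypothesis is precisely what permits disjointly supported translates of compactly supported test functions to be housed in $\Omega$ for large $n$. The very same tension between a globally strictly positive ground state and the exterior geometry is what drives the non-attainment via the strong maximum principle.
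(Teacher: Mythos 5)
Your proof is correct and follows essentially the same three-step structure as the paper's: the identical Nehari/Sobolev lower bound, disjointly supported functions housed in the exterior domain for the upper bound, and a maximum-principle rigidity argument for non-attainment. The only genuine (and mild) divergence is in the upper bound: the paper translates least-energy solutions $w_{i,R}$ of the Dirichlet problem on balls $B_R(0)$, for which $\|w_{i,R}\|_i^2\to S_{p,i}^{p/(p-2)}$ as $R\to\infty$, whereas you translate arbitrary near-optimal test functions $v_i\in C_c^\infty(\mathbb{R}^N)$ with Rayleigh quotient below $S_{p,i}+\eps$. Your version is a bit more elementary (no appeal to existence of ground states on balls, only density of $C_c^\infty$), at the cost of a small $\eps$-limiting step at the end; the paper's version gives the energy level directly. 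For non-attainment you phrase the argument as "all inequalities in the lower bound collapse to equalities," which is the same content as the paper's two-case analysis for each fixed $i$: the coupling terms must vanish, each $|u_i|$ must be a Sobolev minimizer on $\mathbb{R}^N$ solving $-\Delta w+\kappa_iw=\mu_iw^{p-1}$, and the strong maximum principle then forbids $|u_i|$ from vanishing on a set of positive measure (which it must, by the disjointness of supports, regardless of whether $\Omega=\mathbb{R}^N$). Both presentations are equivalent; the paper's case-split makes the individual contradictions slightly more explicit.
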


\begin{proof}
We consider $H^1_0(\Omega)$ to be a subspace of $H^1(\mathbb{R}^N)$, via trivial extension. 

If $(u_1,\ldots,u_M)\in \mathcal{N}$ then, as $\lambda_{ij}<0$ for every $i\neq j$, we have that $\|u_i\|_i^2\leq |u_i|_{p,i}^p$ for all $i=1,\ldots,M$. Hence, 
$$S_{p,i}\leq\frac{\|u_i\|_i^2}{|u_i|_{p,i}^2}\leq (\|u_i\|_i^2)^\frac{p-2}{p}.$$
It follows from \eqref{eq:J} that $\mathcal{J}(u)\geq \frac{p-2}{2p}\sum_{i=1}^M S_{p,i}^\frac{p}{p-2}$.

To prove the opposite inequality, set $B_r(x):=\{y\in\mathbb{R}^N:|y-x|<r\}$, and let $w_{i,R}$ be a least energy solution to the problem
$$-\Delta w + \kappa_iw = \mu_i|w|^{p-2}w,\qquad w\in H_0^1(B_R(0)).$$ 
It is easy to verify that $\lim_{R\to\infty}\|w_{i,R}\|_i^2=S_{p,i}^\frac{p}{p-2}$. Fix $\xi_{i,R}\in\Omega$, $i=1\ldots,m$, such that $B_R(\xi_{i,R})\subset\Omega$ and $B_R(\xi_{i,R})\cap B_R(\xi_{j,R})=\emptyset$ if $i\neq j$, and set $u_R:=(u_{1,R},\ldots,u_{M,R})$ with $u_{i,R}(x):=w_{i,R}(x-\xi_{i,R})$. Then, $u_R\in\mathcal{N}$ and 
$$\lim_{R\to\infty}\mathcal{J}(u_R)= \frac{p-2}{2p}\sum_{i=1}^M S_{p,i}^\frac{p}{p-2}.$$
This completes the proof of \eqref{eq:infimum}.

To show that the infimum is not attained, we argue by contradiction. Assume that $(u_1,\ldots,u_M)\in \mathcal{N}$ and $\mathcal{J}(u)=\frac{p-2}{2p}\sum_{i=1}^M S_{p,i}^\frac{p}{p-2}$. We may assume that $u_i\geq 0$ for all $i=1,\ldots,M$. We fix $i$ and consider two cases. If $\int_\Omega u_j^{\alpha_{ij}}u_i^{\beta_{ij}}\neq 0$ for some $j\neq i$, then $\|u_i\|_i^2<|u_i|_{p,i}^p$ and, hence, $S_{p,i}^{p/(p-2)} < \|u_i\|_i^2$. This implies that $\mathcal{J}(u)>\frac{p-2}{2p}\sum_{i=1}^M S_{p,i}^{p/(p-2)}$, contradicting our assumption. On the other hand, if $\int_\Omega u_j^{\alpha_{ij}}u_i^{\beta_{ij}}= 0$ for all $j\neq i$, then $\|u_i\|_i^2=|u_i|_{p,i}^p=S_{p,i}^{p/(p-2)}$. Hence, $u_i$ is a nontrivial solution to the problem
$$-\Delta w + \kappa_iw = \mu_i|w|^{p-2}w,\qquad w\in H_0^1(\mathbb{R}^N).$$
Moreover, $\int_\Omega u_j^{\alpha_{ij}}u_i^{\beta_{ij}}= 0$ also implies that $u_j^{\alpha_{ij}}u_i^{\beta_{ij}}= 0$ a.e. in $\Omega$. As $u_j\not\equiv 0$ for all $j$, we have that $u_i=0$ in some subset of positive measure of $\mathbb{R}^N$. This contradicts the maximum principle.
\end{proof}

To obtain multiple solutions to the system $\eqref{eq:exterior}$ we introduce some symmetries. 

Let $G$ be a closed subgroup of $O(N)$ and $Gx:=\{gx:g\in G\}$. Set $\mathbb{S}^{N-1}:=\{x\in\mathbb{R}^N:|x|=1\}$. We start with the following lemma.

\begin{lemma} \label{lem:infinite_orbits}
If $\dim(Gx)>0$ for every $x\in\mathbb{R}^N\smallsetminus\{0\}$, then, for each $k\in\mathbb{N}$, there exists $d_k>0$ such that, for every $x\in\mathbb{S}^{N-1}$, there exist $g_1,\ldots,g_k\in G$ with
$$\min_{i\neq j}|g_ix-g_jx|\geq d_k.$$ 
\end{lemma}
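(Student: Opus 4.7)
The plan is to establish the statement pointwise first, and then upgrade to the uniform constant $d_k$ via continuity of the distance functions and compactness of $\mathbb{S}^{N-1}$.

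First, I would fix $x_0\in\mathbb{S}^{N-1}$ and produce group elements for this single point. Since $G$ is a closed subgroup of the compact Lie group $O(N)$, it is itself a compact Lie group (Cartan's theorem), and the orbit $Gx_0$ is diffeomorphic to the homogeneous space $G/G_{x_0}$, where $G_{x_0}$ denotes the isotropy subgroup. The hypothesis $\dim(Gx_0)>0$ forces $Gx_0$ to be a positive-dimensional compact submanifold of $\mathbb{R}^N$, and hence uncountable. Consequently, I can choose $g_1^{x_0},\ldots,g_k^{x_0}\in G$ making the points $g_1^{x_0}x_0,\ldots,g_k^{x_0}x_0$ pairwise distinct, and then set
\[
\delta(x_0):=\min_{i\neq j}|g_i^{x_0}x_0-g_j^{x_0}x_0|>0.
\]

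Next, with these group elements kept fixed, the function $x\mapsto\min_{i\neq j}|g_i^{x_0}x-g_j^{x_0}x|$ is continuous on $\mathbb{S}^{N-1}$, so there is an open neighborhood $V_{x_0}$ of $x_0$ in $\mathbb{S}^{N-1}$ on which this minimum still exceeds $\delta(x_0)/2$. I would then extract a finite subcover $V_{x_1},\ldots,V_{x_m}$ of the compact sphere $\mathbb{S}^{N-1}$ and set
\[
d_k := \tfrac{1}{2}\min_{1\le\ell\le m}\delta(x_\ell)>0.
\]
Given any $x\in\mathbb{S}^{N-1}$, I would pick $\ell$ with $x\in V_{x_\ell}$ and take $g_i:=g_i^{x_\ell}$; this yields $\min_{i\neq j}|g_ix-g_jx|\ge d_k$, as required.

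The only genuinely nontrivial step is the first one: translating $\dim(Gx)>0$ into the existence of at least $k$ distinct orbit points. This is the sole place where the positive-dimensional orbit hypothesis enters, and it reduces to the elementary fact that a smooth manifold of positive dimension is uncountable (equivalently, $G/G_{x_0}$ has positive dimension). Everything after that is a routine continuity-and-compactness packaging, made available by the fact that $G\subset O(N)$ acts by isometries so that all the distance functions involved depend continuously on $x$.
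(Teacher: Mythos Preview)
Your proof is correct and uses the same three ingredients as the paper: the fact that a positive-dimensional orbit contains at least $k$ distinct points, continuity of the maps $x\mapsto |g_ix-g_jx|$, and compactness of $\mathbb{S}^{N-1}$. The only difference is packaging: the paper argues by contradiction and invokes sequential compactness (extract a convergent subsequence $x_n\to x$, choose $\bar g_1,\ldots,\bar g_k$ giving distinct orbit points of $x$, and pass to the limit), whereas you argue directly via an open-cover argument. Both routes are equally short; your version has the minor advantage of producing an explicit (if non-constructive) formula for $d_k$.
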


\begin{proof}
Arguing by contradiction, assume that for some $k\in\mathbb{N}$ and every $n\in\mathbb{N}$ there exists $x_n\in\mathbb{S}^{N-1}$ such that
$$\min_{i\neq j}|g_ix_n-g_jx_n|<\frac{1}{n}\qquad \text{for any }k\text{ elements }\,g_1,\ldots,g_k\in G.$$
After passing to a subsequence, we have that $x_n\to x$ in $\mathbb{S}^{N-1}$. Since $\dim(Gx)>0$, there exist $\bar{g}_1,\ldots,\bar{g}_k\in G$ such that $\bar{g}_ix\neq \bar{g}_jx$ if $i\neq j$. Fix $i\neq j$ such that, after passing to a subsequence, $|\bar{g}_ix_n-\bar{g}_jx_n|=\min_{i\neq j}|\bar{g}_ix_n-\bar{g}_jx_n|$ for every $n\in\mathbb{N}$. Then, 
$$0<\min_{i\neq j}|\bar{g}_ix-\bar{g}_jx|\leq|\bar{g}_ix-\bar{g}_jx|=\lim_{n\to\infty}|\bar{g}_ix_n-\bar{g}_jx_n|=0.$$
This is a contradiction.
\end{proof}

We assume that $\Omega$ is $G$-invariant and define
$$H^1_0(\Omega)^G:=\{v\in H^1_0(\Omega):v\text{ is }G\text{-invariant}\}\qquad\text{and}\qquad\mathcal{H}^G:=(H^1_0(\Omega)^G)^M.$$
Recall that $\Omega$ is called $G$-invariant if $Gx\subset\Omega$ for all $x\in\Omega$, and a function $v:\Omega\to\mathbb{R}$ is $G$-invariant if it is constant on $Gx$ for every $x\in\Omega$. An $M$-tuple $(v_1,\ldots,v_M)$ will be called $G$-invariant if each component $v_i$ is $G$-invariant.

\begin{lemma} \label{lem:compactness_subcritical}
Assume that $\dim(Gx)>0$ for every $x\in\mathbb{R}^N\smallsetminus\{0\}$ and let $\Omega$ be a $G$-invariant exterior domain. Then, the embedding $H^1_0(\Omega)^G\hookrightarrow L^p(\Omega)$ is compact for every $p\in(2,2^*)$.
\end{lemma}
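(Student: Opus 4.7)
The plan is to combine classical Rellich--Kondrachov on bounded subsets with the orbit-separation property of Lemma~\ref{lem:infinite_orbits} to rule out the only obstruction to compactness, namely $L^2$-mass escaping to infinity. Given a bounded sequence $(v_n)\subset H^1_0(\Omega)^G$, I would pass to a weakly convergent subsequence $v_n\rightharpoonup v$ in $H^1_0(\Omega)$; the weak limit is automatically $G$-invariant, since $v_n\circ g=v_n$ for every $g\in G\subset O(N)$ and weak limits respect the resulting isometric action. Setting $w_n:=v_n-v$, the task reduces to showing that any $G$-invariant sequence $w_n\rightharpoonup 0$ in $H^1_0(\Omega)^G$ converges strongly to $0$ in $L^p(\Omega)$, viewing each $w_n$ as extended by zero on $\mathbb{R}^N\smallsetminus\Omega$.

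For this I would apply the vanishing lemma of P.L.~Lions to the bounded sequence $(w_n)$ in $H^1(\mathbb{R}^N)$: if $w_n\not\to 0$ in $L^p(\Omega)$, then after passing to a subsequence there exist $\delta,R>0$ and $y_n\in\mathbb{R}^N$ with $\int_{B_R(y_n)}|w_n|^2\geq\delta$ for every $n$. If $(y_n)$ stays bounded, all the balls $B_R(y_n)$ sit inside a fixed larger ball $B_{R'}(0)$, and Rellich--Kondrachov on the bounded set $\Omega\cap B_{R'}(0)$ together with $w_n\rightharpoonup 0$ forces $w_n\to 0$ in $L^2(B_{R'}(0))$, contradicting the mass lower bound.

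The delicate case is $|y_n|\to\infty$, and this is where Lemma~\ref{lem:infinite_orbits} plays the essential role. I would fix an integer $k$ with $k\delta>\sup_n\|w_n\|_{L^2(\Omega)}^2$ (the supremum is finite since each coercive norm $\|\cdot\|_i$ controls $L^2(\Omega)$). Applying Lemma~\ref{lem:infinite_orbits} to $\hat y_n:=y_n/|y_n|\in\mathbb{S}^{N-1}$ produces $g_1^n,\dots,g_k^n\in G$ with $|g_i^n y_n-g_j^n y_n|\geq |y_n|d_k$, so for $n$ large the balls $B_R(g_i^n y_n)$ are pairwise disjoint. Since $\Omega$ is $G$-invariant, $G\subset O(N)$ preserves Lebesgue measure, and $w_n\circ g=w_n$, the change of variables $z=g_i^n y$ gives $\int_{B_R(g_i^n y_n)}|w_n|^2=\int_{B_R(y_n)}|w_n|^2\geq\delta$; summing over the disjoint balls yields $\|w_n\|_{L^2(\Omega)}^2\geq k\delta$, contradicting the choice of $k$.

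I expect the main obstacle to be exactly this escape-to-infinity scenario: local Rellich is blind to mass carried off along a sequence with $|y_n|\to\infty$, and only the positive-dimensionality of $G$-orbits --- through Lemma~\ref{lem:infinite_orbits} --- forces any such pocket of $L^2$-mass to be duplicated into arbitrarily many pairwise disjoint copies of comparable size, which is incompatible with boundedness in $L^2$. Without that hypothesis on the orbits (e.g.\ if $G$ fixed a nonzero point), the above replication argument collapses, which is precisely why the assumption $\dim(Gx)>0$ is essential.
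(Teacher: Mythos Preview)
Your proposal is correct and follows essentially the same route as the paper: subtract the weak limit, invoke Lions' vanishing lemma, and use the orbit-separation Lemma~\ref{lem:infinite_orbits} to show that any $L^2$-mass carried to infinity would be replicated into arbitrarily many disjoint copies, contradicting $L^2$-boundedness. The only cosmetic difference is that the paper proves the vanishing condition $\sup_{x}\int_{B_1(x)}v_n^2\to 0$ directly (splitting into $|x|\ge R_k$ and $|x|\le R_k$), whereas you argue by contraposition via a concentration sequence $(y_n)$; the two case splits match up exactly.
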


\begin{proof}
Let $(w_n)$ be a bounded sequence in $H_{0}^{1}(\Omega )^{G}$. Then, after passing to a subsequence, 
$w_n\rightharpoonup w$ weakly in $H_{0}^{1}(\Omega )^{G}$. Set $v_n:=w_n-w$. A subsequence of $(v_n)$ satisfies $v_n\rightharpoonup 0$ weakly in $H_{0}^{1}(\Omega )^{G}$, $v_n\to 0$ in $L_{loc}^{2}(\Omega)$ and $v_n(x)\to 0$ a.e. in $\Omega$. We claim that
\begin{equation}
\sup_{x\in \mathbb{R}^{N}}\int_{B_1(x)}v_{n}^{2}\to 0\qquad \text{as }\;n\to \infty. \label{eq:lions}
\end{equation}
To prove this claim, let $\varepsilon >0$, and let $C>0$ be such that $\|v_n\|^2\leq C$ for all $n\in \mathbb{N}$, where $\|\cdot\|$ is the standard norm in $H_{0}^{1}(\Omega )$. We choose $k\in \mathbb{N}$ such that $C<\varepsilon k$ and $d_k>0$ as in Lemma \ref{lem:infinite_orbits}, and we fix $R_k>2/d_k$. We consider two cases.

Assume first that $|x|\geq R_k$. By Lemma \ref{lem:infinite_orbits}, there exist $g_1,\ldots,g_k\in G$ such that
$$|g_ix-g_jx|\geq|x|d_k\quad\text{for all } i\neq j.$$
Since $|x|\geq R_k$, we have that $|g_ix-g_jx|> 2$. Hence, $B_1(g_ix)\cap B_1(g_jx)=\emptyset$ if $i\neq j$ and, as $v_n$ is $G$-invariant, we obtain
$$k\int_{B_1(x)}v_{n}^{2}=\sum\limits_{i=1}^k\int_{B_1(g_ix)}v_n^2\leq \int_{\Omega}v_n^2\leq\|v_n\|^2\leq C\qquad \text{for all }n\in \mathbb{N}.$$
Therefore,
\begin{equation}
\int_{B_1(x)}v_n^2<\varepsilon \qquad\text{for all }n\in \mathbb{N}\text{ and all }|x|\geq R_k. \label{eq:outside}
\end{equation}

Now assume that  $|x|\leq R_k$. Then, since $v_n\to 0$ strongly in $L^2(B_{R_k+1}(0))$, there exists $n_0\in \mathbb{N}$ such that
\begin{equation}
\int_{B_1(x)}v_n^2\leq \int_{B_{R_k+1}(0)}v_n^2<\varepsilon \qquad\text{for all }n\geq n_{0}.  \label{eq:inside}
\end{equation}

Inequalities \eqref{eq:outside} and \eqref{eq:inside} yield \eqref{eq:lions}. Applying Lions' lemma \cite[Lemma 1.21]{w} we conclude that $v_n\to 0$ strongly in $L^p(\Omega)$ for any $p\in (2,2^*)$.
\end{proof}

\begin{lemma} \label{lem:PS_exterior}
Assume that $\dim(Gx)>0$ for every $x\in\mathbb{R}^N\smallsetminus\{0\}$ and let $\Omega$ be a $G$-invariant exterior domain. Then, the functional $\mathcal{J}$ satisfies the Palais-Smale condition in $\mathcal{H}^G$, i.e., every sequence $(u_n)$ in $\mathcal{H}^G$ such that $\mathcal{J}(u_n)\to c$ and $\mathcal{J}'(u_n)\to 0$ in $(\mathcal{H}^G)'$, contains a convergent subsequence.
\end{lemma}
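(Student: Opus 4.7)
The plan is the standard Palais--Smale argument for a subcritical problem, in which the compact embedding provided by Lemma \ref{lem:compactness_subcritical} does all the serious work. I would first establish boundedness of any $(PS)_c$-sequence $(u_n)=(u_{1,n},\ldots,u_{M,n})$ in $\mathcal{H}^G$, then extract a weakly convergent subsequence, pass to strong $L^p$-convergence using Lemma \ref{lem:compactness_subcritical}, and finally upgrade this to strong convergence in $\mathcal{H}^G$.

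For boundedness, I would compute $\mathcal{J}(u_n)-\tfrac{1}{p}\mathcal{J}'(u_n)[u_n]$. Using the symmetry relations $\lambda_{ij}=\lambda_{ji}$, $\alpha_{ij}=\beta_{ji}$, and $\alpha_{ij}+\beta_{ij}=p$, pairing $(i,j)$ with $(j,i)$ cancels the cross terms exactly, and the pure nonlinear terms cancel as well, leaving
\[
\mathcal{J}(u_n)-\tfrac{1}{p}\mathcal{J}'(u_n)[u_n] = \tfrac{p-2}{2p}\|u_n\|^2.
\]
Since the left-hand side is bounded by $|\mathcal{J}(u_n)|+\tfrac{1}{p}\|\mathcal{J}'(u_n)\|_{(\mathcal{H}^G)'}\|u_n\|$, and $p>2$, this immediately yields boundedness of $(u_n)$ in $\mathcal{H}^G$.

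Next, passing to a subsequence, $u_{i,n}\rightharpoonup u_i$ in $H^1_0(\Omega)^G$ for every $i$. Lemma \ref{lem:compactness_subcritical} gives $u_{i,n}\to u_i$ strongly in $L^p(\Omega)$ since $p\in(2,2^*)$. Writing
\[
\mathcal{J}'(u_n)[u_n-u] = \sum_{i=1}^M\langle u_{i,n},u_{i,n}-u_i\rangle_i - \sum_{i=1}^M\int_\Omega\mu_i|u_{i,n}|^{p-2}u_{i,n}(u_{i,n}-u_i)-\sum_{i\neq j}\int_\Omega\lambda_{ij}\beta_{ij}|u_{j,n}|^{\alpha_{ij}}|u_{i,n}|^{\beta_{ij}-2}u_{i,n}(u_{i,n}-u_i),
\]
the left-hand side tends to $0$ because $\|\mathcal{J}'(u_n)\|\to 0$ and $(u_n-u)$ is bounded. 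The pure nonlinear term vanishes in the limit since $|u_{i,n}|^{p-2}u_{i,n}$ is bounded in $L^{p/(p-1)}(\Omega)$ and $u_{i,n}-u_i\to 0$ in $L^p(\Omega)$. For each cross term, Hölder's inequality with exponents $\tfrac{p}{\alpha_{ij}},\tfrac{p}{\beta_{ij}-1},\tfrac{p}{1}$ (whose reciprocals sum to $1$ thanks to $\alpha_{ij}+\beta_{ij}=p$) bounds it by $|u_{j,n}|_p^{\alpha_{ij}}|u_{i,n}|_p^{\beta_{ij}-1}|u_{i,n}-u_i|_p\to 0$. Hence $\sum_i\langle u_{i,n},u_{i,n}-u_i\rangle_i\to 0$, and combining with $\sum_i\langle u_i,u_{i,n}-u_i\rangle_i\to 0$ (weak convergence) yields $\sum_i\|u_{i,n}-u_i\|_i^2\to 0$, i.e., $u_n\to u$ strongly in $\mathcal{H}^G$.

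The only non-routine ingredient is the strong $L^p$-convergence on the unbounded domain $\Omega$; this is precisely the content of Lemma \ref{lem:compactness_subcritical} and was the real obstacle, already handled there via symmetry. Once that compactness is in hand, the remaining steps are purely algebraic (boundedness identity) and Hölder estimates, and the subcriticality $p<2^*$ is used only to apply that lemma.
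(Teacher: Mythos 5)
Your proof is correct and takes essentially the same route as the paper: boundedness via the algebraic identity $\mathcal{J}(u_n)-\tfrac1p\mathcal{J}'(u_n)[u_n]=\tfrac{p-2}{2p}\|u_n\|^2$, followed by the compact embedding of Lemma \ref{lem:compactness_subcritical} and routine H\"older estimates to upgrade weak to strong convergence. The paper states the boundedness step with the expression $\mathcal{J}(u_n)-\mathcal{J}'(u_n)u_n$ and constant $\tfrac{p-2}{p}$, which is a small typo for the identity you wrote; the rest is the ``standard arguments'' the paper invokes without spelling them out, and your version fills them in correctly.
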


\begin{proof}
Since 
\[
\frac{p-2}p\|u_n\|^2 = \cJ(u_n)-\cJ'(u_n)u_n \le c_1+c_2\|u_n\|,
\]
$(u_n)$ is bounded. The rest of the proof follows from Lemma \ref{lem:compactness_subcritical} by standard arguments.
\end{proof}

\begin{lemma} \label{lem:genus_exterior}
Let $\mathcal{U}^G:=\mathcal{U}\cap\mathcal{H}^G$. Then, $\mathrm{genus}(\mathcal{U}^G)=\infty$.
\end{lemma}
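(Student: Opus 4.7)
The plan is to prove $\mathrm{genus}(\mathcal{U}^G)\geq k$ for every $k\in\mathbb{N}$ by producing an odd continuous map $\phi_k:\mathbb{S}^{k-1}\to\mathcal{U}^G$; the conclusion then follows from the standard monotonicity of the genus. The starting observation is that Proposition \ref{prop:m}$(a)$ reduces the task to constructing, for each $k$, an $Mk$-tuple of nontrivial $G$-invariant functions in $H^1_0(\Omega)$ with pairwise disjoint supports.

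First, since $\Omega$ is an exterior domain, I fix $R_0>0$ with $\{x\in\mathbb{R}^N:|x|>R_0\}\subset\Omega$, and choose $Mk$ pairwise disjoint spherical annuli $A_\ell:=\{x\in\mathbb{R}^N:r_\ell<|x|<R_\ell\}\subset\{|x|>R_0\}$, $\ell=1,\ldots,Mk$. Each $A_\ell$ is $O(N)$-invariant, hence $G$-invariant. Inside each $A_\ell$ I pick a nontrivial smooth radial function $\varphi_\ell\in\mathcal{C}_c^\infty(A_\ell)$; radiality guarantees $G$-invariance for any subgroup $G\subset O(N)$. I relabel these functions as $\varphi_{i,\ell}$ with $i=1,\ldots,M$ and $\ell=1,\ldots,k$.

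For $t=(t_1,\ldots,t_k)\in\mathbb{S}^{k-1}$, define $u_i(t):=\sum_{\ell=1}^k t_\ell\,\varphi_{i,\ell}\in H^1_0(\Omega)^G$. Because the $\varphi_{i,\ell}$ have pairwise disjoint supports, $u_i(t)\neq 0$ for every $t\in\mathbb{S}^{k-1}$ and every $i$ (at least one $t_\ell$ is nonzero), and $u_i(t)$, $u_j(t)$ have disjoint supports when $i\neq j$. Proposition \ref{prop:m}$(a)$ then yields
\[
\phi_k(t):=\left(\frac{u_1(t)}{\|u_1(t)\|_1},\ldots,\frac{u_M(t)}{\|u_M(t)\|_M}\right)\in\mathcal{U}^G,
\]
and the map $\phi_k:\mathbb{S}^{k-1}\to\mathcal{U}^G$ is manifestly continuous and odd, so $\mathrm{genus}(\mathcal{U}^G)\geq k$.

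I do not anticipate a real obstacle: the symmetry hypothesis $\dim(Gx)>0$ plays no role in this step (it is needed elsewhere, for the compactness in Lemma \ref{lem:compactness_subcritical}), because radial bumps supported in disjoint annuli of the exterior part of $\Omega$ already furnish an unlimited supply of $G$-invariant functions with disjoint supports. The only very small point to record is that one can indeed fit $Mk$ disjoint annuli inside $\{|x|>R_0\}$ for each $k$, which is evident.
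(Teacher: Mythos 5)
Your proof is correct and follows essentially the same strategy as the paper: pick $Mk$ nontrivial $G$-invariant functions with pairwise disjoint supports, form for each $t$ an $M$-tuple whose $i$-th component is a linear combination over the index $\ell$, normalize, and invoke Proposition \ref{prop:m}$(a)$. The only differences are presentational. First, you build the disjoint-support functions explicitly as radial bumps in disjoint annuli contained in the unbounded part of $\Omega$, whereas the paper merely asserts that such $u_{j,i}\in H^1_0(\Omega)^G$ can be chosen; your explicit construction is a useful gap-filler and also makes transparent your (correct) remark that $\dim(Gx)>0$ plays no role here. Second, you parametrize directly by $t\in\mathbb{S}^{k-1}$ and set $u_i(t)=\sum_\ell t_\ell\varphi_{i,\ell}$, while the paper parametrizes by the crossed polytope $Q=\partial B_{\ell^1}^k$ (which is oddly homeomorphic to $\mathbb{S}^{k-1}$) and takes nonnegative convex combinations of $\pm u_{j,i}$; thanks to the disjoint supports, $\|u_i(t)\|_i^2=\sum_\ell t_\ell^2\|\varphi_{i,\ell}\|_i^2>0$ for $t\neq0$, so your normalization is well defined and the two constructions are equivalent, with yours being a bit more streamlined. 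Either way, one obtains an odd continuous map $\mathbb{S}^{k-1}\to\mathcal{U}^G$, which by the standard genus property gives $\mathrm{genus}(\mathcal{U}^G)\geq k$ for all $k$.
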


\begin{proof}
Given $k\geq 1$, for each $j=1,\ldots,k$, $i=1,\ldots,M$, we choose $u_{j,i}\in H_0^1(\Omega)^G$ such that $\|u_{j,i}\|_i=1$ and $\mathrm{supp}(u_{j,i})\cap \mathrm{supp}(u_{j',i'})=\emptyset$ if $(j,i)\neq (j',i')$.

Let $\{ e_j : 1 \leq j \leq k\}$ be the canonical basis of $\mathbb{R}^k$, and $Q$ be the set
$$Q := \left\{\sum_{j=1}^k r_j \hat{e}_j : \hat{e}_j \in \{\pm e_j\}, \, r_j \in [0,1], \, \sum_{j=1}^k r_j = 1 \right\}.$$
Note that $Q$ is homeomorphic to the unit sphere $\mathbb{S}^{k-1}$ in $\mathbb{R}^k$ by an odd homeomorphism.

For each $i=1,\ldots,M$, define $\sigma_i:Q\to H_0^1(\Omega)^G$ by setting $\sigma_i(e_j) := u_{j,i}$, $\sigma_i(-e_j):= -u_{j,i}$, and
$$\sigma_i\left(\sum_{j=1}^k r_j \hat{e}_j \right) := \frac{\sum_{j=1}^k r_j \sigma_i(\hat{e}_j)}{\|\sum_{j=1}^k r_j \sigma_i(\hat{e}_j)\|_i}.$$
Note that, since $u_{j,i}$ and $u_{j',i'}$ have disjoint supports if $(j,i)\neq (j',i')$, these maps are well defined and $\mathrm{supp}(\sigma_i(z))\cap\mathrm{supp}(\sigma_{i'}(z))=\emptyset$ if $i\neq i'$ for every $z\in Q$. So, by Proposition \ref{prop:m}$(a)$, the map $\sigma: Q \to\mathcal{U}^G$ given by $\sigma(z):=(\sigma_1(z),\ldots,\sigma_M(z))$ is well defined. As each $\sigma_i$ is continuous and odd, so is $\sigma$. Hence, $\mathrm{genus}(\mathcal{U}^G) \geq \mathrm{genus}(Q)=k$.
\end{proof}

\begin{proof}[Proof of Theorem \ref{thm:symmetric_exterior}]
The functional $\mathcal{J}$ is $G$-invariant, so, by the principle of symmetric criticality, the critical points of the restriction of $\mathcal{J}$ to $\mathcal{H}^G$ are the $G$-invariant critical points of $\mathcal{J}$; see, e.g., \cite[Theorem 1.28]{w}.

It is readily seen that the results of Section \ref{sec:variational} are also true for $H:=H_0^1(\Omega)^G$. Theorem \ref{thm:A}$(ii)$ and Lemma \ref{lem:PS_exterior} imply that $\Psi$ satisfies the $(PS)_c$-condition for every $c\in\mathbb{R}$. This, together with Lemma  \ref{lem:genus_exterior} and Theorem \ref{thm:B}, yields Theorem \ref{thm:symmetric_exterior}.
\end{proof}

\subsection{Entire solutions to critical systems} \label{subsec:yamabe}

Next, we consider the Yamabe system \eqref{eq:yamabe}.

As usual, we denote 
$$S:=\inf\limits_{\substack{w\in D^{1,2}(\mathbb{R}^N) \\ w\neq 0}}\frac{\|w\|^2}{|w|_{2^*}^2},$$
where \, $\|w\|^2:=\int_{\mathbb{R}^N}|\nabla w|^2$ \, and \, $|w|_{2^*}^{2^*}:=\int_{\mathbb{R}^N}|w|^{2^*}$. The next result says that the system \eqref{eq:yamabe} cannot be solved by minimization. 

\begin{proposition}
We have that
$$\inf_{u\in\mathcal{N}}\mathcal{J}(u)=\frac{1}{N}\sum_{i=1}^M \mu_i^{-\frac{N-2}{2}}S^\frac{N}{2}$$
and this infimum is not attained by $\mathcal{J}$ on $\mathcal{N}$.
\end{proposition}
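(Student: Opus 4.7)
The plan is to mirror the subcritical argument in the preceding proposition, replacing the least--energy solutions on large balls with (translates of) the Aubin--Talenti bubble. Let $U\in D^{1,2}(\r^N)$ denote the positive Aubin--Talenti function normalized so that $-\Delta U=U^{2^*-1}$, equivalently $\|U\|^2=|U|_{2^*}^{2^*}=S^{N/2}$. Note that for $p=2^*$ the prefactor in \eqref{eq:J} equals $1/N$, which matches the announced formula.

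For the lower bound, fix $u=(u_1,\ldots,u_M)\in\cN$. Since $\lambda_{ij}<0$, the Nehari identity $\partial_i\cJ(u)u_i=0$ gives $\|u_i\|^2\le\mu_i|u_i|_{2^*}^{2^*}$, and combined with Sobolev's inequality $S\,|u_i|_{2^*}^2\le\|u_i\|^2$ this yields $\|u_i\|^2\ge\mu_i^{-(N-2)/2}S^{N/2}$ for every $i$. Summing and applying \eqref{eq:J} gives the desired lower bound.

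For the upper bound, set $W_i:=\mu_i^{-(N-2)/4}U$, so that $-\Delta W_i=\mu_iW_i^{2^*-1}$ and $\|W_i\|^2=\mu_i|W_i|_{2^*}^{2^*}=\mu_i^{-(N-2)/2}S^{N/2}$. Fix distinct points $\xi_1,\ldots,\xi_M\in\r^N$, and for $R>0$ define $W_{i,R}(x):=W_i(x-R\xi_i)$ and $W_R:=(W_{1,R},\ldots,W_{M,R})$. By translation invariance the coefficients $a_{W_R,i}=\tfrac12\|W_i\|^2$ and $b_{W_R,i}=\tfrac{1}{2^*}\mu_i|W_i|_{2^*}^{2^*}=\tfrac{1}{2^*}\|W_i\|^2$ of $J_{W_R}$ do not depend on $R$ and satisfy $2a_{W_R,i}=2^*b_{W_R,i}$, so $(1,\ldots,1)$ is the unique critical point of the limiting function $J$ obtained by setting $d_{ij}=0$. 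For every $i\ne j$ the off-diagonal coefficient $d_{W_R,ij}$ is a fixed positive scalar multiple of $\int_{\r^N}U(y)^{\alpha_{ij}}U(y-R(\xi_j-\xi_i))^{\beta_{ij}}\,dy$. Because $\alpha_{ij}+\beta_{ij}=2^*$, one has $U^{\alpha_{ij}}\in L^{2^*/\alpha_{ij}}$ and $U^{\beta_{ij}}\in L^{2^*/\beta_{ij}}$ with $\alpha_{ij}/2^*+\beta_{ij}/2^*=1$; a standard density argument (approximate both factors by continuous functions of compact support and use that their supports are eventually disjoint) then shows that this integral tends to $0$ as $R\to\infty$. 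Lemma \ref{lem:stability} now applies to the perturbation $J_{W_R}$ of $J$: for $R$ large $W_R\in\tilde\cU$, the critical point $s_{W_R}$ converges to $(1,\ldots,1)$, and by \eqref{eq:J},
\[
\cJ(s_{W_R}W_R)=\frac{1}{N}\sum_{i=1}^M s_{W_R,i}^2\,\|W_i\|^2\longrightarrow\frac{1}{N}\sum_{i=1}^M\mu_i^{-(N-2)/2}S^{N/2},
\]
which matches the lower bound.

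For the non-attainment, suppose $u\in\cN$ realizes the infimum; passing to $|u|$ we may assume $u_i\ge 0$. Then the two inequalities used in the lower bound must be equalities for every $i$. Equality in $\|u_i\|^2\le\mu_i|u_i|_{2^*}^{2^*}$ together with $\lambda_{ij}<0$ forces $\int|u_j|^{\alpha_{ij}}|u_i|^{\beta_{ij}}=0$ for all $j\ne i$, so $u_i$ and $u_j$ have disjoint supports. Equality in Sobolev's inequality forces $u_i$ to be an extremal for $S$ and therefore, up to translation and dilation, an Aubin--Talenti bubble, which is strictly positive a.e.\ in $\r^N$. These two conclusions cannot coexist when $M\ge 2$, giving the desired contradiction.

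The main obstacle is the decay of the cross integrals in the upper-bound construction: Hölder's inequality alone only produces a uniform bound, so one must exploit the $L^p$--$L^{p'}$ duality between $U^{\alpha_{ij}}$ and $U^{\beta_{ij}}$ through compactly supported approximations in order to obtain the required $o(1)$ as $R\to\infty$. Everything else is a straightforward adaptation of the subcritical argument.
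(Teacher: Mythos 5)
Your argument is correct and is essentially the intended one. The paper itself gives no details here — its proof is the single sentence ``Following the argument given in [cp, Proposition 2.2] for $M=2$ one can easily prove this statement'' — so what you wrote is exactly the kind of adaptation the authors had in mind, and it mirrors the subcritical proposition's strategy (translated profiles placed far apart, Nehari scaling, and Sobolev rigidity for non-attainment) in the natural way.

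A few points worth flagging, all of which you handled correctly. First, the computation $2a_{W_R,i}=2^*b_{W_R,i}$ is exactly what makes $(1,\ldots,1)$ a critical point of the decoupled $J$, so Lemma \ref{lem:stability} applies and yields $s_{W_R}\to(1,\ldots,1)$; this is the precise replacement for the exact membership $u_R\in\mathcal{N}$ enjoyed by the compactly supported test functions in the subcritical case. Second, for the cross-term decay you correctly noticed that H\"older alone is not enough, and the $L^{2^*/\alpha_{ij}}$--$L^{2^*/\beta_{ij}}$ duality plus compactly supported approximation gives the needed $o(1)$; this is the one step where the critical case genuinely differs from the subcritical one, where the supports are literally disjoint. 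Third, the non-attainment argument via equality in both the Nehari inequality and Sobolev's inequality, combined with the strict positivity of Aubin--Talenti extremals, is exactly the analogue of the maximum-principle argument in the subcritical proposition, and is what one finds in [cp, Proposition 2.2]. No gaps.
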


\begin{proof}
Following the argument given in \cite[Proposition 2.2]{cp} for $M=2$ one can easily prove this statement.
\end{proof}

To obtain multiple solutions to the system \eqref{eq:yamabe} we consider a conformal action on $\mathbb{R}^N$, as in \cite{d,cp}. 

Let $\Gamma=O(m)\times O(n)$ with $m+n=N+1$ and $m,n\geq 2$ act on $\mathbb{R}^{N+1}\equiv\mathbb{R}^m\times\mathbb{R}^n$ in the obvious way. Then, $\Gamma$ acts isometrically on the unit sphere $\mathbb{S}^N:=\{x\in\mathbb{R}^{N+1}:|x|=1\}$. The stereographic projection $\sigma:\mathbb{S}^N\to\mathbb{R}^N\cup\{\infty\}$, which maps the north pole $(0,\ldots,0,1)$ to $\infty$, induces a conformal action of $\Gamma$ on $\mathbb{R}^N$, given by 
\begin{equation} \label{eq:conformal_action}
(\gamma,x)\mapsto\tilde{\gamma}x, \qquad\text{where }\;\tilde{\gamma}:=\sigma\circ\gamma^{-1}\circ\sigma^{-1}:\mathbb{R}^N\to\mathbb{R}^N.
\end{equation}
Note that the map $\tilde{\gamma}$ is well defined except at a single point.

The group $\Gamma$ acts on the Sobolev space $D^{1,2}(\mathbb{R}^N)$ by linear isometries as follows:
$$\gamma w:=|\det \tilde{\gamma}'|^{1/2^*}w\circ\tilde{\gamma},\qquad\text{for any }\gamma\in\Gamma\;\text{ and }\;w\in D^{1,2}(\mathbb{R}^N);$$
see \cite[Section 3]{cp}. We shall say that $w$ is $\Gamma$-invariant if $\gamma w=w$ for all $\gamma\in\Gamma$, and that $(u_1,\ldots,u_M)$ is $\Gamma$-invariant if each $u_i$ is $\Gamma$-invariant. We set
$$D^{1,2}(\mathbb{R}^N)^\Gamma:=\{w\in D^{1,2}(\mathbb{R}^N):w\text{ is }\Gamma\text{-invariant}\},\qquad\mathcal{H}^\Gamma:=(D^{1,2}(\mathbb{R}^N)^\Gamma)^M.$$
One has the following results.

\begin{lemma} \label{lem:compactness_critical}
The embedding $D^{1,2}(\mathbb{R}^N)^\Gamma\hookrightarrow L^{2^*}(\mathbb{R}^N)$ is compact.
\end{lemma}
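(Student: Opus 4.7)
The plan is to reduce the problem from the noncompact space $\mathbb{R}^N$ to the compact sphere $\mathbb{S}^N$ via stereographic projection, and then use the positive-dimensional orbits of $\Gamma$ to upgrade the Sobolev embedding on $\mathbb{S}^N$ from continuous to compact at the critical exponent.

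First I would set up the conformal identification. The stereographic projection $\sigma:\mathbb{S}^N\to\mathbb{R}^N\cup\{\infty\}$ induces an isometry between $D^{1,2}(\mathbb{R}^N)$ (endowed with its Dirichlet norm) and $H^1(\mathbb{S}^N)$ (endowed with the conformal-Laplacian inner product $\int_{\mathbb{S}^N}(|\nabla_{\mathbb{S}^N}v|^2+\frac{N(N-2)}{4}v^2)$), such that $L^{2^*}$-norms match. By construction of the action \eqref{eq:conformal_action}, the $\Gamma$-action on $D^{1,2}(\mathbb{R}^N)$ corresponds to the pullback action of $\Gamma$ on $H^1(\mathbb{S}^N)$ induced by the \emph{isometric} action of $\Gamma=O(m)\times O(n)$ on $\mathbb{S}^N\subset\mathbb{R}^m\times\mathbb{R}^n$. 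Hence it suffices to prove that the embedding $H^1(\mathbb{S}^N)^\Gamma\hookrightarrow L^{2^*}(\mathbb{S}^N)$ is compact.

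Next I would observe that every $\Gamma$-orbit in $\mathbb{S}^N$ has positive dimension. Indeed, an orbit of $(y,z)\in\mathbb{S}^N\subset\mathbb{R}^m\times\mathbb{R}^n$ is diffeomorphic to $\mathbb{S}^{m-1}_{|y|}\times\mathbb{S}^{n-1}_{|z|}$ (with the convention that a sphere of radius $0$ is a point), and its dimension is at least $\min(m-1,n-1)\geq 1$ since $m,n\geq 2$. I would then invoke the Hebey--Vaugon theorem: if a compact group acts isometrically on a compact $N$-manifold with every orbit of dimension at least $\ell$, then the embedding of the invariant Sobolev space $H^1(\cdot)^\Gamma$ into $L^q$ is compact for every $q<2(N-\ell)/(N-\ell-2)$ (or for every $q$ if $N-\ell\leq 2$). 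Since $\ell\geq 1$, a direct comparison gives $2^*=\frac{2N}{N-2}<\frac{2(N-1)}{N-3}$ for $N\geq 4$, while for $N=3$ we have $m=n=2$, $\ell=1$ and $N-\ell=2$, so all exponents are admissible. Thus $2^*$ falls in the compact range and the claim follows.

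If a self-contained proof is preferred over citing Hebey--Vaugon, I would argue directly in the spirit of Lemma \ref{lem:compactness_subcritical}: a bounded sequence $(v_n)$ in $H^1(\mathbb{S}^N)^\Gamma$ admits a weakly convergent subsequence, and by concentration-compactness the $L^{2^*}$-defect measure is a countable sum of point masses $\nu=\sum c_j\delta_{x_j}$. Since each $v_n$, and hence $\nu$, is $\Gamma$-invariant, each atom $x_j$ forces an equal-mass atom at every point of the $\Gamma$-orbit of $x_j$; but these orbits are positive-dimensional, in particular infinite, contradicting $\nu(\mathbb{S}^N)<\infty$ unless all $c_j=0$. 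Therefore $v_n\to v$ strongly in $L^{2^*}(\mathbb{S}^N)$.

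The main obstacle is precisely this last step: ordinary Sobolev embeddings fail to be compact at the critical exponent, and to overcome this one must exploit that $\Gamma$-invariance prevents concentration at any single point. The cleanest route is to quote the cited work \cite{cp}, where this type of compactness under the conformal $O(m)\times O(n)$-action is worked out in detail; the adaptation from $M=2$ to general $M$ is immediate since the embedding in question is a statement about a single scalar function space.
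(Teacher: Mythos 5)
Your proposal is correct and is essentially the argument behind the paper's proof, which simply cites Proposition 3.3 and Example 3.4(1) of \cite{cp}: transfer to $H^1(\mathbb{S}^N)$ via stereographic projection, note that the $O(m)\times O(n)$-action on $\mathbb{S}^N$ is isometric with all orbits of positive dimension, and apply the resulting compactness of the invariant Sobolev embedding at the critical exponent. You in fact supply more detail (the Hebey--Vaugon range and the concentration-compactness alternative) than the paper, which defers entirely to the reference.
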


\begin{proof}
This follows from Proposition 3.3 and Example 3.4(1) in \cite{cp}.
\end{proof}

\begin{lemma} \label{lem:PS_critical}
The functional $\mathcal{J}$ satisfies the Palais-Smale condition in $\mathcal{H}^\Gamma$.
\end{lemma}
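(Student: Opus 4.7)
The plan is to run the standard concentration-compactness-free argument for Palais--Smale sequences in critical problems, leveraging Lemma \ref{lem:compactness_critical} as the device that restores compactness precisely at the critical exponent $2^{*}$. The only thing that is specific to the system (as opposed to a single equation) is keeping track of the cross terms $\lambda_{ij}|u_j|^{\alpha_{ij}}|u_i|^{\beta_{ij}}$, but because $\alpha_{ij}+\beta_{ij}=2^{*}$ these are handled by a single application of Hölder with exponents $2^{*}/\alpha_{ij}$ and $2^{*}/\beta_{ij}$.

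First, I would show that any $(PS)_c$-sequence $(u_n)=(u_{1,n},\ldots,u_{M,n})$ in $\mathcal{H}^{\Gamma}$ is bounded. Because every nonlinear term in $\mathcal{J}$ is homogeneous of degree $2^{*}$, they all cancel in
$$\mathcal{J}(u_n)-\frac{1}{2^{*}}\mathcal{J}'(u_n)[u_n]=\frac{1}{N}\sum_{i=1}^{M}\|u_{i,n}\|^{2},$$
and the left-hand side is $c+o(1)+o(\|u_n\|)$, yielding $\|u_n\|\le C$.

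Next, pass to a subsequence so that $u_{i,n}\rightharpoonup u_i$ in $D^{1,2}(\r^N)^{\Gamma}$ for each $i$. By Lemma \ref{lem:compactness_critical} the embedding into $L^{2^{*}}(\r^N)$ is compact, so $u_{i,n}\to u_i$ strongly in $L^{2^{*}}$ and a.e.\ (along a subsequence). For any fixed $v\in\mathcal{H}^{\Gamma}$, the linear parts of $\mathcal{J}'(u_n)[v]$ pass to the limit by weak convergence, while each coupling integral $\int\lambda_{ij}\beta_{ij}|u_{j,n}|^{\alpha_{ij}}|u_{i,n}|^{\beta_{ij}-2}u_{i,n}v$ passes to the limit by strong $L^{2^{*}}$ convergence (split via Hölder with exponents $\alpha_{ij}/2^{*}$, $(\beta_{ij}-1)/2^{*}$, $1/2^{*}$, since these sum to $1$). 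Hence $\mathcal{J}'(u)=0$.

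Finally, to upgrade weak convergence to strong, evaluate
$$\bigl(\mathcal{J}'(u_n)-\mathcal{J}'(u)\bigr)[u_n-u],$$
which tends to $0$ since $\mathcal{J}'(u_n)\to 0$ in $(\mathcal{H}^{\Gamma})'$, $\mathcal{J}'(u)=0$, and $(u_n-u)$ is bounded. The gradient parts contribute exactly $\sum_{i=1}^{M}\|u_{i,n}-u_i\|^{2}$, while the pure-power and coupling contributions tend to $0$ by the strong $L^{2^{*}}$ convergence combined with the same Hölder splitting above. This yields $\|u_n-u\|\to 0$. The only potentially problematic step is the passage to the limit in the critical-growth nonlinearities, and Lemma \ref{lem:compactness_critical} is precisely what removes that obstacle; the rest is bookkeeping.
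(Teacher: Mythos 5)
Your proof is correct and follows the same route as the paper: bound the $(PS)_c$-sequence using the identity $\mathcal{J}(u_n)-\frac{1}{2^*}\mathcal{J}'(u_n)[u_n]=\frac{1}{N}\|u_n\|^2$ (the paper does this in the proof of Lemma~\ref{lem:PS_exterior} and refers back to it), then pass to the limit using the compact embedding of Lemma~\ref{lem:compactness_critical} together with Hölder. The paper simply calls this the ``standard argument''; you have spelled out exactly those standard steps.
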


\begin{proof}
The proof follows from Lemma \ref{lem:compactness_critical} by standard arguments (boundedness of  Palais-Smale sequences is proved as in Lemma \ref{lem:PS_exterior}).
\end{proof}

\begin{lemma} \label{lem:genus_critical}
Let $\mathcal{U}^\Gamma:=\mathcal{U}\cap\mathcal{H}^\Gamma$. Then, $\mathrm{genus}(\mathcal{U}^\Gamma)=\infty$.
\end{lemma}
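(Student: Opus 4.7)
The plan is to mimic the proof of Lemma \ref{lem:genus_exterior} verbatim, once one produces, for each $k \geq 1$, $kM$ functions $u_{j,i} \in D^{1,2}(\mathbb{R}^N)^\Gamma$ ($1\le j\le k$, $1\le i\le M$) with $\|u_{j,i}\|_i = 1$ and pairwise disjoint supports. Given such a family, define $Q$ and the maps $\sigma_i$ as in Lemma \ref{lem:genus_exterior}. Disjointness of supports is preserved by the convex-combination-and-normalize procedure, so $\mathrm{supp}(\sigma_i(z))\cap\mathrm{supp}(\sigma_{i'}(z))=\emptyset$ whenever $i\neq i'$, for every $z\in Q$. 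Proposition \ref{prop:m}$(a)$ then yields $(\sigma_1(z),\ldots,\sigma_M(z))\in \mathcal{U}^\Gamma$, and the resulting odd continuous map $Q\to \mathcal{U}^\Gamma$ gives $\mathrm{genus}(\mathcal{U}^\Gamma)\ge \mathrm{genus}(Q)=k$. Letting $k\to\infty$ proves the claim.

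To build the $u_{j,i}$, I would exploit the orbit structure of the isometric $\Gamma$-action on $\mathbb{S}^N\subset\mathbb{R}^m\times\mathbb{R}^n$. Its orbits are the products $\{(x_1,x_2):|x_1|=\cos\theta,\ |x_2|=\sin\theta\}$ for $\theta\in[0,\pi/2]$, so the orbit space is a closed interval, and in particular positive-dimensional (the hypothesis $m,n\ge 2$ ensures that every orbit has positive dimension and that the orbit map is continuous). Fix an open subinterval $I\subset(0,\pi/2)$ whose preimage in $\mathbb{S}^N$ stays at positive distance from the north pole. Split $I$ into $kM$ pairwise disjoint open subarcs $I_\ell$, and let $\mathcal{O}_\ell\subset\mathbb{S}^N$ be the $\Gamma$-invariant preimage of $I_\ell$. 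Averaging a nonnegative smooth bump function compactly supported in $\mathcal{O}_\ell$ over the compact group $\Gamma$ (with respect to Haar measure) produces a nontrivial $\Gamma$-invariant smooth function on $\mathbb{S}^N$ supported in $\mathcal{O}_\ell$. Since $\mathcal{O}_\ell$ is disjoint from a neighborhood of the north pole, pulling back through stereographic projection, with the conformal weight implicit in \eqref{eq:conformal_action}, produces a function $v_\ell\in C_c^\infty(\mathbb{R}^N)\cap D^{1,2}(\mathbb{R}^N)^\Gamma$ whose support lies in a $\tilde\Gamma$-invariant open subset of $\mathbb{R}^N$, and the $kM$ supports are pairwise disjoint. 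Re-indexing $v_\ell$ as $u_{j,i}$ and normalizing to $\|u_{j,i}\|_i=1$ completes the construction.

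The only substantive point is this orbit-space bookkeeping: verifying that $\mathbb{S}^N/\Gamma$ is positive-dimensional, so arbitrarily many disjoint $\Gamma$-invariant bumps fit on it, and that stereographic pullback keeps them smooth and compactly supported in $\mathbb{R}^N$. Both facts follow from the standing hypothesis $m,n\ge 2$ together with the fact that our bumps are supported away from the north pole. Everything else is exactly the argument of Lemma \ref{lem:genus_exterior}.
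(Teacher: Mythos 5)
Your proposal is correct and follows exactly the route the paper indicates: the paper's proof of Lemma \ref{lem:genus_critical} literally says ``the proof is the same as that of Lemma \ref{lem:genus_exterior},'' and your argument is precisely that transplantation, the only genuinely new step being the construction of arbitrarily many pairwise disjointly supported $\Gamma$-invariant functions in $D^{1,2}(\mathbb{R}^N)^\Gamma$. You supply that step (which the paper leaves implicit) by partitioning the one-dimensional orbit space $[0,\pi/2]$ of the $\Gamma$-action on $\mathbb{S}^N$ into disjoint subarcs away from $\theta=\pi/2$, producing $\Gamma$-invariant bumps on the corresponding saturated open sets, and transferring them to $\mathbb{R}^N$ via the weighted stereographic pullback that intertwines the isometric action on $\mathbb{S}^N$ with the conformal action on $D^{1,2}(\mathbb{R}^N)$; this is sound. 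A minor simplification: the Haar averaging is unnecessary, since any smooth function $g_\ell$ compactly supported in a subarc $I_\ell\subset(0,\pi/2)$ already yields a $\Gamma$-invariant function $x\mapsto g_\ell(\theta(x))$ on $\mathbb{S}^N$, where $\theta(x)$ is the unique orbit parameter, because $\Gamma$-invariance is equivalent to dependence on $(|x_1|,|x_2|)$ alone.
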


\begin{proof}
The proof is the same as that of Lemma \ref{lem:genus_exterior}.
\end{proof}

\begin{proof}[Proof of Theorem \ref{thm:entire_critical}]
The functional $\mathcal{J}$ is $\Gamma$-invariant; see \cite[Section 3]{cp}. Thus, the critical points of the restriction of $\mathcal{J}$ to $\mathcal{H}^\Gamma$ are the $\Gamma$-invariant critical points of $\mathcal{J}$. 

The results of Section \ref{sec:variational} hold true for $H=D^{1,2}(\mathbb{R}^N)^\Gamma$. Theorem \ref{thm:A}$(ii)$ and Lemma \ref{lem:PS_critical} imply that $\Psi$ satisfies the $(PS)_c$-condition for every $c\in\mathbb{R}$. This, together with Lemma \ref{lem:genus_critical} and Theorem \ref{thm:B}, yields Theorem \ref{thm:entire_critical}.
\end{proof}

\subsection{Brezis-Nirenberg systems}

Finally, we consider the Brezis-Nirenberg type system \eqref{eq:bn}.

For each $I\subset \{1,\ldots,M\}$, let $(\mathscr{S}_I)$ be the system of $M-|I|$ equations obtained by replacing $\kappa_i, \mu_i, \lambda_{ij},\lambda_{ji}$ with $0$ if $i\in I$, where $|I|$ is the cardinality of $I$, i.e.,
\begin{equation*}
(\mathscr{S}_I)\qquad
\begin{cases}
-\Delta u_i + \kappa_i u = \mu_i|u_i|^{2^*-2}u_i + \sum\limits_{j\neq i} \lambda_{ij}\beta_{ij}|u_j|^{\alpha_{ij}}|u_i|^{\beta_{ij}-2}u_i, \\
u_i\in D^{1,2}_0(\Omega),\qquad i,j\in\{1,\ldots,M\}\smallsetminus I.
\end{cases}
\end{equation*}
The fully nontrivial solutions of $(\mathscr{S}_I)$ correspond to the solutions $(u_1,\ldots,u_M)$ of \eqref{eq:bn} which satisfy $u_i=0$ iff $i\in I$. We set
$$c_I:=\inf\{\mathcal{J}(u):u=(u_1,\ldots,u_M)\text{ solves }\eqref{eq:bn}\text{ and }u_i=0\text{ iff }i\in I\}.$$

\begin{lemma} \label{lem:bn_compactness}
If
$$c_0:=\inf_{u\in\mathcal{N}}\mathcal{J}(u)<\min\left\{c_I + \frac{1}{N}\sum_{i\in I} \mu_i^{-\frac{N-2}{2}}S^\frac{N}{2}:\emptyset\neq I\subset\{1,\ldots,M\}\right\},$$
then this infimum is attained by $\mathcal{J}$ on $\mathcal{N}$.
\end{lemma}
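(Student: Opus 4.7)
The plan is to take a Palais-Smale minimizing sequence for $\mathcal{J}$ on $\mathcal{N}$ at level $c_0$, extract a weak limit, and use a Brezis-Lieb type splitting together with the Sobolev inequality to either conclude strong convergence or exhibit a forbidden energy gap that contradicts the hypothesis.

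First I would apply Ekeland's variational principle to $\Psi:\mathcal{U}\to\mathbb{R}$ (legitimate since $\Psi\to\infty$ on $\partial\mathcal{U}$ by Theorem \ref{thm:A}(iv)) to obtain $u_n\in\mathcal{U}$ with $\Psi(u_n)\to c_0$ and $\|\Psi'(u_n)\|_*\to 0$, and transport via Theorem \ref{thm:A}(ii) to a sequence $v_n:=\mathfrak{m}(u_n)\in\mathcal{N}$ satisfying $\mathcal{J}(v_n)\to c_0$ and $\mathcal{J}'(v_n)\to 0$ in $\mathcal{H}^{-1}$. Boundedness of $(v_n)$ is immediate from the Nehari identity \eqref{eq:J}; so, up to a subsequence, $v_n\rightharpoonup v$ in $\mathcal{H}$, $v_{n,i}\to v_i$ in $L^2(\Omega)$ (compact embedding, since $\Omega$ is bounded) and $v_n\to v$ a.e. Passing $\mathcal{J}'(v_n)\to 0$ to the limit against fixed test functions in $C_c^\infty(\Omega)^M$ shows $\mathcal{J}'(v)=0$, so $v$ is a solution of \eqref{eq:bn}.

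Setting $w_n:=v_n-v$, the Brezis-Lieb lemma applied to each critical integrand, together with strong $L^2$-convergence of $v_{n,i}$ (which absorbs the $\kappa_iw^2$ terms), should yield
\[
\mathcal{J}(v_n)=\mathcal{J}(v)+\tilde{\mathcal{J}}(w_n)+o(1), \qquad \tilde{\mathcal{J}}'(w_n)=o(1) \text{ in }\mathcal{H}^{-1},
\]
where $\tilde{\mathcal{J}}$ denotes $\mathcal{J}$ with all $\kappa_i$ replaced by $0$. From $\tilde{\mathcal{J}}'(w_n)(0,\ldots,w_{n,i},\ldots,0)=o(1)$ together with $\lambda_{ij}<0$ one obtains $\int|\nabla w_{n,i}|^2\le\mu_i\int|w_{n,i}|^{2^*}+o(1)$, whence the Sobolev inequality produces the dichotomy: for each $i$, either $w_{n,i}\to 0$ in $D^{1,2}_0(\Omega)$, or $\liminf_n\int|\nabla w_{n,i}|^2\ge\mu_i^{-(N-2)/2}S^{N/2}$. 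Summing the $i$-th identities and exploiting $\alpha_{ij}+\beta_{ij}=2^*$ with the symmetries $\lambda_{ij}=\lambda_{ji}$, $\alpha_{ij}=\beta_{ji}$ gives the analogue of \eqref{eq:J},
\[
\tilde{\mathcal{J}}(w_n)=\tfrac{1}{N}\sum_{i=1}^M\int|\nabla w_{n,i}|^2+o(1).
\]

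To conclude, set $I:=\{i:v_i=0\}$ and $L:=\{i:w_{n,i}\not\to 0\}$. Since $v_n\in\mathcal{N}$, Proposition \ref{prop:m}(d) guarantees $\|v_{n,i}\|_i\ge d_0>0$, so $L\supseteq I$. If $I\ne\emptyset$, then $v$ is a fully nontrivial solution of $(\mathscr{S}_I)$, hence $\mathcal{J}(v)\ge c_I$, and assembling the pieces gives
\[
c_0=\mathcal{J}(v)+\tilde{\mathcal{J}}(w_n)+o(1)\ge c_I+\tfrac{1}{N}\sum_{i\in I}\mu_i^{-(N-2)/2}S^{N/2},
\]
contradicting the hypothesis. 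Therefore $I=\emptyset$, so $v\in\mathcal{N}$ and $\mathcal{J}(v)\ge c_0$; the same decomposition then forces $L=\emptyset$, meaning $v_n\to v$ strongly in $\mathcal{H}$ and $\mathcal{J}(v)=c_0$. The main obstacle I anticipate is the Brezis-Lieb decomposition of the mixed critical terms $\int|v_{n,j}|^{\alpha_{ij}}|v_{n,i}|^{\beta_{ij}}$, both in $\mathcal{J}$ and in its derivative; I would handle it by expanding $|v_j+w_{n,j}|^{\alpha_{ij}}|v_i+w_{n,i}|^{\beta_{ij}}$ and checking, via Hölder, a.e.\ convergence, and Vitali's convergence theorem, that all the mixed $v$-$w_n$ cross products vanish in $L^1(\Omega)$.
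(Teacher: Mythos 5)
Your argument is correct, but it is considerably heavier than the one in the paper and takes a genuinely different route. You invoke a full Brezis--Lieb splitting $\mathcal{J}(v_n)=\mathcal{J}(v)+\tilde{\mathcal{J}}(w_n)+o(1)$ together with a Brezis--Lieb-type decomposition of the derivative (both for the single powers $|v_{n,i}|^{2^*}$ and for the mixed terms $|v_{n,j}|^{\alpha_{ij}}|v_{n,i}|^{\beta_{ij}}$), and then run the usual Sobolev dichotomy on the remainders $w_{n,i}$. The paper sidesteps all of this. It exploits the Nehari identity \eqref{eq:J} in the form $\mathcal{J}(u_n)=\frac{1}{N}\sum_i\|u_{n,i}\|_i^2$, uses only the strong $L^2$-convergence $u_{n,i}\to 0$ for $i\in I$ to replace $\|u_{n,i}\|_i^2$ by $\|\nabla u_{n,i}\|_{L^2}^2+o(1)$ and derive the lower bound $\mu_i^{-(N-2)/2}S^{N/2}\leq\|u_{n,i}\|_i^2+o(1)$ directly from the Nehari inequality and Sobolev, and then applies plain weak lower semicontinuity of the norms to the components with $i\notin I$, together with the identity $\mathcal{J}(u)=\frac{1}{N}\sum_{i\notin I}\|u_i\|_i^2$ valid for any critical point. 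No Brezis--Lieb lemma, no splitting of the cross terms, no Vitali argument. What your approach buys is a self-contained concentration--compactness picture that also records exactly which components lose mass; what the paper's approach buys is brevity and the avoidance of precisely the technical obstacle you flag at the end (the Brezis--Lieb decomposition for the mixed critical terms), which, while standard, requires care to state and verify in full. Both proofs use Ekeland on $\Psi$ plus Theorem \ref{thm:A}(ii) to produce the Palais--Smale sequence, identify $I=\{i:u_i=0\}$, and reach a contradiction with the strict inequality in the hypothesis, so the overall architecture agrees.
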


\begin{proof}
Note that $\inf_{v\in\mathcal{N}}\mathcal{J}(v)=\inf_{v\in\mathcal{U}}\Psi(v)$. So, by Ekeland's variational principle \cite[Theorem 8.5]{w} and Theorem \ref{thm:A}, there exists a sequence $(u_n)$ in $\mathcal{N}$ such that $\mathcal{J}(u_n)\to c_0$ and $\mathcal{J}'(u_n)\to 0$. It follows from \eqref{eq:J} that $(u_n)$ is bounded in $\mathcal{H}:=(D^{1,2}_0(\Omega))^M$. So, after passing to a subsequence, $u_n\rightharpoonup u$ weakly in $\mathcal{H}$, $u_n\to u$ strongly in $L^2(\Omega)$ and $u_n\to u$ a.e. in $\Omega$. A standard argument shows that $u$ is a solution to the system \eqref{eq:bn}. We claim that $u$ is fully nontrivial.

Arguing by contradiction, assume that some components of $u$ are trivial. Let $I:=\{i\in\{1,\ldots,M\}:u_i=0\}$. Then, for each $i\in I$, we have that $u_{n,i}\to 0$ strongly in $L^2(\Omega)$. As $u_n\in\mathcal{N}$ and $\lambda_{ij}<0$, we get that
$$\|u_{n,i}\|^2 + o(1) = \|u_{n,i}\|^2_i \leq \mu_i|u_{n,i}|_{2^*}^{2^*},$$
where
$$\|w\|^2:=\int_{\Omega}|\nabla w|^2,\qquad |w|_p^p:=\int_{\Omega}|w|^p,\qquad\|w\|^2_i:=\|w\|^2+\kappa_i|w|_2^2.$$
Hence,
$$S\le\frac{\|u_{n,i}\|^2}{|u_{n,i}|_{2^*}^2}=\frac{\|u_{n,i}\|_i^2}{|u_{n,i}|_{2^*}^2}+o(1)\leq \mu_i^\frac{N-2}{N}(\|u_{n,i}\|_i^2)^\frac{2}{N} + o(1),$$
i.e.,\, $\mu_i^{-\frac{N-2}{2}}S^\frac{N}{2}\leq \|u_{n,i}\|_i^2 + o(1)$\, for every $i\in I$. As $u$ solves \eqref{eq:bn}, we obtain
\begin{align*}
c_0 &=\lim_{n\to\infty}\mathcal{J}(u_n)=\lim_{n\to\infty}\frac{1}{N}\left(\sum_{i\not\in I}\|u_{n,i}\|_i^2 + \sum_{i\in I}\|u_{n,i}\|_i^2\right)\\
&\geq \liminf_{n\to\infty}\frac{1}{N}\sum_{i\not\in I}\|u_{n,i}\|_i^2 + \frac{1}{N}\sum_{i\in I}\mu_i^{-\frac{N-2}{2}}S^\frac{N}{2}\\
&\geq \frac{1}{N}\sum_{i\not\in I}\|u_i\|_i^2 + \frac{1}{N}\sum_{i\in I}\mu_i^{-\frac{N-2}{2}}S^\frac{N}{2} = \mathcal{J}(u)+ \frac{1}{N}\sum_{i\in I}\mu_i^{-\frac{N-2}{2}}S^\frac{N}{2}\\
&\geq c_I + \frac{1}{N}\sum_{i\in I}\mu_i^{-\frac{N-2}{2}}S^\frac{N}{2}.
\end{align*}
This contradicts our assumption.

Therefore, $u$ is fully nontrivial. This implies that $u\in\mathcal{N}$, and \eqref{eq:J} yields
$$c_0\leq \mathcal{J}(u) \leq \liminf_{n\to\infty}\mathcal{J}(u_n)=c_0.$$
Hence, $\mathcal{J}(u)=c_0$, as claimed.
\end{proof}

\begin{lemma} \label{lem:bn_estimate}
Let $N\geq 4$. Assume that $\min\{\alpha_{ij},\beta_{ij}\}\geq\frac{4}{3}$ if $N=5$ and $\alpha_{ij}=\beta_{ij}=2$ if $N=4$, for all $i,j=1,\ldots,M$. Then
\begin{equation} \label{eq:bn_estimate}
\inf_{u\in\mathcal{N}}\mathcal{J}(u)<\min\left\{c_I + \frac{1}{N}\sum_{i\in I} \mu_i^{-\frac{N-2}{2}}S^\frac{N}{2}:\emptyset\neq I\subset\{1,\ldots,M\}\right\}.
\end{equation}
\end{lemma}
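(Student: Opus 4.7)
My approach is a Brezis--Nirenberg style test-function argument. Fix a nonempty $I\subset\{1,\ldots,M\}$; we may assume $c_I<\infty$ (else the inequality for this $I$ is trivial), so there exists a fully nontrivial solution $\bar u=(\bar u_j)_{j\notin I}$ of the subsystem $(\mathscr{S}_I)$ with $\mathcal{J}(\bar u)\le c_I+\delta$ for a small $\delta>0$ to be fixed later. By elliptic regularity and the $\mathcal{C}^2$-assumption on $\partial\Omega$, each $\bar u_j\in C(\overline{\Omega})$ and vanishes on $\partial\Omega$. I will build a test element $w_\varepsilon\in\mathcal{H}$ such that $\max_{s}\mathcal{J}(sw_\varepsilon)$ lies strictly below $c_I+\sum_{i\in I}\mu_i^{-(N-2)/2}S^{N/2}/N$; by Theorem \ref{thm:A} and the definition of $c_0$, this yields \eqref{eq:bn_estimate}.

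Choose $|I|$ distinct points $\xi_i\in\Omega$ ($i\in I$) with pairwise disjoint balls $B_{2\rho}(\xi_i)\subset\Omega$, together with cutoffs $\eta_i\in C_c^\infty(B_{2\rho}(\xi_i))$ equal to $1$ on $B_\rho(\xi_i)$, and set $w_{j,\varepsilon}:=\bar u_j$ for $j\notin I$ and
\[
w_{i,\varepsilon}(x):=\mu_i^{-(N-2)/4}\eta_i(x)\,U_\varepsilon(x-\xi_i)\qquad(i\in I),
\]
where $U_\varepsilon(y)=[N(N-2)\varepsilon^2]^{(N-2)/4}(\varepsilon^2+|y|^2)^{-(N-2)/2}$ is the Aubin--Talenti bubble. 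Since the bubble supports are pairwise disjoint, every bubble-to-bubble interaction term in $\mathcal{J}$ vanishes identically. By Lemma \ref{lem:stability}, for small $\varepsilon$ the map $s\mapsto\mathcal{J}(sw_\varepsilon)$ has a unique maximizer $s_\varepsilon\in(0,\infty)^M$ close to $(1,\ldots,1)$, and the maximum decomposes as
\[
\max_s\mathcal{J}(sw_\varepsilon) = c_I+\delta+\sum_{i\in I}\mathcal{E}_i(\varepsilon)+\sum_{i\in I,\,j\notin I}\mathcal{X}_{ij}(\varepsilon)+o(\gamma_\varepsilon),
\]
where $\gamma_\varepsilon:=|\eta_i U_\varepsilon|_2^2$, $\mathcal{E}_i(\varepsilon)$ is the one-bubble self-energy, and $\mathcal{X}_{ij}(\varepsilon)$ the cross coupling between $w_{i,\varepsilon}$ and $\bar u_j$.

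The classical Brezis--Nirenberg expansion yields
\[
\mathcal{E}_i(\varepsilon)=\frac{S^{N/2}}{N\mu_i^{(N-2)/2}}+\frac{\kappa_i}{2\mu_i^{(N-2)/2}}\gamma_\varepsilon+o(\gamma_\varepsilon),
\]
with $\gamma_\varepsilon\asymp\varepsilon^2$ if $N\geq 5$ and $\gamma_\varepsilon\asymp\varepsilon^2\log(1/\varepsilon)$ if $N=4$; the middle term is strictly negative because $\kappa_i<0$. The cross terms $\mathcal{X}_{ij}(\varepsilon)$, positive because $-\lambda_{ij}>0$, are estimated via $|\bar u_j|\leq C$ on the bubble supports and the Aubin--Talenti $L^q$-asymptotics: $|\eta_iU_\varepsilon|_q^q$ is of order $\varepsilon^{q(N-2)/2}$ when $q<N/(N-2)$, of order $\varepsilon^{N-q(N-2)/2}$ when $q>N/(N-2)$, with a logarithmic factor at the exponent $q=N/(N-2)$. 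Under the hypotheses of the lemma -- no restriction for $N\geq 6$ (since $4/(N-2)\leq 1$); $\min\{\alpha_{ij},\beta_{ij}\}\geq 4/3$ for $N=5$ (which forces all relevant exponents into $[4/3,2]$) -- every such exponent makes $\mathcal{X}_{ij}(\varepsilon)=o(\gamma_\varepsilon)$. Combining the pieces,
\[
\max_s\mathcal{J}(sw_\varepsilon)\leq c_I+\delta+\sum_{i\in I}\frac{S^{N/2}}{N\mu_i^{(N-2)/2}}-\eta\,\gamma_\varepsilon+o(\gamma_\varepsilon)
\]
for some $\eta>0$ independent of $\varepsilon$; first fixing $\varepsilon$ small enough so that the remainder is $\leq -\eta\gamma_\varepsilon/2$, then $\delta$ smaller than $\eta\gamma_\varepsilon/4$, gives the desired strict inequality.

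The main obstacle is the borderline case $N=4$, $\alpha_{ij}=\beta_{ij}=2$: here the cross integrals live \emph{exactly} at the Brezis--Nirenberg scale $\gamma_\varepsilon=\varepsilon^2\log(1/\varepsilon)$, with $\mathcal{X}_{ij}(\varepsilon)=|\lambda_{ij}|\bar u_j(\xi_i)^2\mu_i^{-(N-2)/2}\gamma_\varepsilon\,(1+o(1))$. To force the net coefficient of $\gamma_\varepsilon$ to remain strictly negative, one exploits that $\bar u_j$ vanishes continuously on $\partial\Omega$: by placing each concentration point $\xi_i$ close enough to $\partial\Omega$, the quantities $\bar u_j(\xi_i)^2$ become arbitrarily small, so that $\sum_{j\notin I}|\lambda_{ij}|\bar u_j(\xi_i)^2<|\kappa_i|/2$ for every $i\in I$. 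One can then conclude by letting $\varepsilon\to 0$ as in the higher-dimensional case.
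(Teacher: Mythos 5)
Your approach is genuinely different from the paper's: instead of inducting on $M$ (reducing to $|I|=1$ and using Lemma \ref{lem:bn_compactness} to extract an exact minimizer of the smaller system), you place $|I|$ disjoint bubbles simultaneously on top of a near-minimizing solution of $(\mathscr{S}_I)$. The one-bubble expansion, the asymptotics for $|\eta_i U_\varepsilon|_q^q$, and the boundary trick for $N=4$ are all correct. However, there are two genuine problems.

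First, your claim that for $N=5$ the restriction $\min\{\alpha_{ij},\beta_{ij}\}\ge\frac43$ makes every cross term $o(\gamma_\varepsilon)$ is false at the endpoints: if $\alpha_{ij}=\frac43$ or $\alpha_{ij}=2$ (which the hypothesis allows, e.g.\ $\alpha_{ij}=\frac43$, $\beta_{ij}=2$), your own asymptotics give $\int|w_{i,\varepsilon}|^{\alpha_{ij}}\asymp\varepsilon^2\asymp\gamma_\varepsilon$, not $o(\gamma_\varepsilon)$. This borderline $N=5$ case needs the same shrink-$\rho$/near-boundary device you apply for $N=4$; the paper handles it explicitly (``If $N=5$ and $\min\{\alpha_{ij},\beta_{ij}\}=\frac43$ for some pairs \dots we argue in a similar way''). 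Second, and more seriously, your final quantifier order is circular. The near-minimizer $\bar u$ depends on $\delta$; the constants in the $o(\gamma_\varepsilon)$ remainder, and in the borderline cases the location of the $\xi_i$ (hence $\eta$), depend on $\bar u$. You cannot first fix $\varepsilon$ and then shrink $\delta$, because shrinking $\delta$ changes $\bar u$ and hence the estimates against which $\varepsilon$ was chosen. To dispatch the $\delta$-error you would need uniformity of the remainders over all solutions $\bar u$ with $\mathcal{J}(\bar u)\le c_I+1$, which you do not establish and which is delicate in the borderline cases (the choice of $\xi_i$ near $\partial\Omega$ is solution-dependent). The paper's induction sidesteps this entirely: the induction hypothesis plus Lemma \ref{lem:bn_compactness} supplies an exact minimizer of $(\mathscr{S}_I)$ with $|I|=1$, so no $\delta$ appears.
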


\begin{proof}
We prove this statement by induction on $M$.

If $M=1$ the system reduces to the single equation
$$-\Delta u + \kappa_1 u = \mu_1|u|^{2^*-2}u,\qquad u\in D_0^{1,2}(\Omega),$$
and the statement was proved by Brezis and Nirenberg in \cite{bn}.

Assume that the statement is true for every system $(\mathscr{S}_I)$ with $|I|\geq 1$ (i.e., for every system of $M-1$ equations). Then, the right-hand side of \eqref{eq:bn_estimate} reduces to
$$\min\left\{c_I + \frac{1}{N}\sum_{i\in I} \mu_i^{-\frac{N-2}{2}}S^\frac{N}{2}:|I|=1\right\}.$$
Without loss of generality, we may assume that $I=\{M\}$. By Lemma \ref{lem:bn_compactness} and our induction hypothesis, there exists a positive, least energy, fully nontrivial solution $(u_1,\ldots,u_{M-1})$ to the system $(\mathscr{S}_I)$. Fix $\xi\in\Omega$ and $\varrho\in (0,1)$ such that $B_\varrho(\xi)\subset\Omega$, and a cut-off function $\chi\in\mathcal{C}^\infty_c(B_\varrho(\xi))$ such that $0\leq\chi\leq 1$ and $\chi\equiv 1$ in $B_{\varrho/2}(\xi)$. Set 
$$w_\varepsilon(x):=\chi(x)\mu_M^\frac{2-N}{4} U_{\varepsilon,\xi},$$
where
$$U_{\varepsilon,\xi}:=a_N\left(\frac{\varepsilon}{\varepsilon^2+|x-\xi|^2}\right)^\frac{N-2}{2},\qquad\text{with }\;a_N=(N(N-2))^\frac{N-2}{4}.$$
It is shown in \cite{bn} that
\begin{align}
\|w_\varepsilon\|^2 &=\mu_M^\frac{2-N}{2}S^\frac{N}{2}+O(\varepsilon^{N-2}),\;\qquad |w_\varepsilon|^{2^*}_{2^*}=\mu_M^{-\frac{N}{2}}S^\frac{N}{2}+O(\varepsilon^N),\label{eq:bn1} \\
|w_\varepsilon|_2^2 &\geq
\begin{cases}
d_0\varepsilon^2+O(\varepsilon^{N-2})&\quad\text{if } N\geq 5,\\
d_0\varepsilon^2|\ln\varepsilon|+O(\varepsilon^2)&\quad\text{if } N=4,
\end{cases} \label{eq:bn2}
\end{align}
for some $d_0>0$; see also \cite[Lemma 1.46]{w}. Inspecting the proofs in \cite{bn, w} one sees that $d_0$ may be chosen independently of $\varrho$. Moreover, if $\alpha,\beta>1$ and $\alpha+\beta=2^*$, we have that
\begin{equation} \label{eq:bn3}
|w_\varepsilon|_\beta^\beta \leq c_1\int_{B_\varrho(0)}\left(\frac{\varepsilon}{\varepsilon^2+|x|^2}\right)^{\frac{N-2}{2}\beta}\mathrm{d}x\leq
\begin{cases} 
d_1\varepsilon^{\frac{N-2}{2}\beta} &\;\text{if }\beta\neq \frac{2^*}2,\\
d_1\varepsilon^\frac{N}{2}(1+|\ln\varepsilon|)  &\;\text{if }\beta =\frac{2^*}2,
\end{cases}
\end{equation}
for some $c_1, d_1>0$ and $\varepsilon$ small enough. By a regularity result in \cite[Appendix B]{s}, $u_i\in\mathcal{C}^0(\overline{\Omega})$ and we get that
$$\int_\Omega |w_\varepsilon|^\alpha|u_i|^\beta\leq \left(\max_{x\in\bar{\Omega}}|u_i(x)|^\beta\right)\int_\Omega|w_\varepsilon|^\alpha \to 0\quad\text{as }\varepsilon\to 0.$$
Hence, there exists $\varepsilon_0>0$ such that, for every $\varepsilon\in(0,\varepsilon_0)$,
\begin{align*}
&\mu_i|u_i|_{2^*}^{2^*}+\sum_{j\neq i}\beta_{ij}\,\lambda_{ij}\int_\Omega |u_j|^{\alpha_{ij}}|u_i|^{\beta_{ij}} + \beta_{iM}\,\lambda_{iM}\int_\Omega |w_\varepsilon|^{\alpha_{iM}}|u_i|^{\beta_{iM}}\\
&\qquad\qquad=\|u_i\|_i^2 + \beta_{iM}\,\lambda_{iM}\int_\Omega |w_\varepsilon|^{\alpha_{iM}}|u_i|^{\beta_{iM}} >0,\qquad i,j=1,\ldots,M-1, \\
&\mu_M|w_\varepsilon|_{2^*}^{2^*}+\sum_{j=1}^{M-1}\beta_{Mj}\,\lambda_{Mj}\int_\Omega |u_j|^{\alpha_{Mj}}|w_\varepsilon|^{\beta_{Mj}} > 0.
\end{align*}
Therefore we may use Lemma \ref{lem:maximality} in order to obtain $0<r<R<\infty$ and $s_{\varepsilon,1},\ldots,s_{\varepsilon, M}\in[r,R]$ such that
$$u_\varepsilon =(s_{\varepsilon,1}u_1,\ldots,s_{\varepsilon,M-1}u_{M-1},s_{\varepsilon,M}w_\varepsilon)\in\mathcal{N}.$$
As $(u_1,\ldots,u_{M-1})$ is a least energy solution to the system $(\mathscr{S}_I)$, from \eqref{eq:maximum} and estimates \eqref{eq:bn1} we obtain
\begin{align*}
\mathcal{J}(u_\varepsilon) &= \frac{1}{2}\sum_{i=1}^{M-1}s_{\varepsilon,i}^2\|u_i\|_i^2 - \frac{1}{2^*}\sum_{i=1}^{M-1}s_{\varepsilon,i}^{2^*}\mu_i|u_i|^{2^*}_{2^*} \\
&\quad- \frac{1}{2}\sum_{\substack{j\neq i\\i,j\ne M}}s_{\varepsilon,j}^{\alpha_{ij}} s_{\varepsilon,i}^{\beta_{ij}}\lambda_{ij}\int_\Omega|u_j|^{\alpha_{ij}}|u_i|^{\beta_{ij}}+\frac{1}{2}s_{\varepsilon,M}^2\|w_\varepsilon\|^2 - \frac{1}{2^*}s_{\varepsilon,M}^{2^*}\mu_M|w_\varepsilon|^{2^*}_{2^*} \\
&\quad+\frac{1}{2}s_{\varepsilon,M}^2\kappa_M|w_\varepsilon|_2^2 - \sum_{i=1}^{M-1}s_{\varepsilon,M}^{\alpha_{iM}} s_{\varepsilon,i}^{\beta_{iM}}\lambda_{iM}\int_\Omega|w_\varepsilon|^{\alpha_{iM}}|u_i|^{\beta_{iM}} \\
&\leq c_I + \frac{1}{N}\mu_M^{-\frac{N-2}{2}}S^\frac{N}{2} + O(\varepsilon^{N-2})\\
&\quad - \frac{1}{2}r^2|\kappa_M||w_\varepsilon|_2^2 + \sum_{i=1}^M R^{2^*}|\lambda_{iM}|\int_\Omega|w_\varepsilon|^{\alpha_{iM}}|u_i|^{\beta_{iM}}.
\end{align*}
So, if either $N\geq 6$, or $N=5$ and $\min\{\alpha_{ij},\beta_{ij}\}>\frac{4}{3}$ for all $i,j=1,\ldots,M$, we derive from \eqref{eq:bn2} and \eqref{eq:bn3} that, for $\varepsilon$ small enough,
$$- \frac{1}{2}r^2|\kappa_M||w_\varepsilon|_2^2 + \sum_{i=1}^M R^{2^*}|\lambda_{iM}|\int_\Omega|w_\varepsilon|^{\alpha_{iM}}|u_i|^{\beta_{iM}} \leq - C\varepsilon^2 + o(\varepsilon^2)$$
and \eqref{eq:bn_estimate} follows.
In the remaining cases we need to be careful when selecting $\xi$ and $\varrho$. If $N=4$ and $\alpha_{ij}=\beta_{ij}=2$ for all $i,j=1,\ldots,M$, we choose them in such a way that
$$\max_{x\in B_\varrho(\xi)}|u_i(x)|^2\leq \frac{r^2|\kappa_M|}{4M R^4|\lambda_{iM}|}\qquad\text{for every } i=1,\ldots,M-1.$$
This can be done because $u_i=0$ on $\partial\Omega$. Then, from \eqref{eq:bn2} we get
\begin{align*}
- \frac{1}{2}r^2|\kappa_M||w_\varepsilon|_2^2 + \sum_{i=1}^M R^4|\lambda_{iM}|\int_\Omega|w_\varepsilon|^2|u_i|^2 &\leq - \frac{1}{4}r^2|\kappa_M||w_\varepsilon|_2^2 \\
&\leq -C\varepsilon^2|\ln\varepsilon|+O(\varepsilon^2).
\end{align*}
If $N=5$ and $\min\{\alpha_{ij},\beta_{ij}\}=\frac{4}{3}$ for some pairs $i,j=1,\ldots,M$ we argue in a similar way. Hence, in all cases,  
$$\inf_{u\in\mathcal{N}}\mathcal{J}(u)\leq\mathcal{J}(u_\varepsilon)< c_I + \frac{1}{N}\mu_M^{-\frac{N-2}{2}}S^\frac{N}{2},$$
for $\varepsilon$ small enough, as claimed.
\end{proof}

\begin{proof}[Proof of Theorem \ref{thm:bn}]
The result follows from Lemmas \ref{lem:bn_compactness}, \ref{lem:bn_estimate} and  Theorem \ref{thm:B}$(a)$.
\end{proof}

\begin{remark} \label{rem} 
\emph{If $N=5$ and $\min\{\alpha_{ij},\beta_{ij}\}>\frac43$ or $N\ge 6$, then the condition $\partial\Omega\in \mathcal{C}^2$ is not necessary because $u_i\in \mathcal{C}^0(\Omega)$ according to the results in \cite[Appendix B]{s} and we may choose any $\xi,\varrho$ such that $\overline{B}_\varrho(\xi)\subset\Omega$.}
\end{remark}

\bigskip

\begin{flushleft}
\textbf{Mónica Clapp}\\
Instituto de Matemáticas\\
Universidad Nacional Autónoma de México\\
Circuito Exterior, Ciudad Universitaria\\
04510 Coyoacán, Ciudad de México, Mexico\\
\texttt{monica.clapp@im.unam.mx} 
\medskip

\textbf{Andrzej Szulkin}\\
Department of Mathematics\\
Stockholm University\\
106 91 Stockholm, Sweden\\
\texttt{andrzejs@math.su.se} 
\end{flushleft}

\end{document}